\numberwithin{equation}{section}
\theoremstyle{plain}
\newtheorem{thm}{Theorem}[section]
\newtheorem{defn}{Definition}[section]
\newtheorem{prop}{Proposition}[section]
\newtheorem{remark}{Remark}[section]
\newtheorem{example}{Example}[section]
\newcommand{\bbT}{\mathbb{T}}
\newcommand{\bbW}{\mathbb{W}}
\newcommand{\bbR}{\mathbb{R}}
\def\@setcopyright{}
\def\serieslogo@{}
\begin{document}
\title{\textbf{Terminal Ranking Games}\thanks{
Erhan Bayraktar is supported in part by the NSF under grant DMS-1613170 and by the Susan M. Smith Professorship. We are grateful to Jak\v sa Cvitani\'c for stimulating discussions.}}

 \author{Erhan Bayraktar\thanks{Department of Mathematics, University of Michigan, 530 Church Street, Ann Arbor, MI 48104, USA, erhan@umich.edu} \and  Yuchong Zhang\thanks{Department of Statistical Sciences, University of Toronto, 100 St.\ George Street, Toronto, Ontario M5S 3G3, Canada, yuchong.zhang@utoronto.ca}
 }
 \date{\today}
 
\maketitle

\begin{abstract}
We analyze a mean field tournament: a mean field game in which the agents receive rewards according to the ranking of the terminal value of their projects and are subject to cost of effort. Using Schr\"{o}dinger bridges we are able to explicitly calculate the equilibrium. This allows us to identify the reward functions which would yield a desired equilibrium and solve several related mechanism design problems. We are also able to identify the effect of reward inequality on the players' welfare as well as calculate the price of anarchy. 

\noindent \textbf{Keywords:} Tournaments, rank-based rewards, mechanism design, mean field games, price of anarchy, Schr\"{o}dinger bridges, Lorenz order.

\noindent \textbf{2020 Mathematics Subject Classification:}
 91A16, 91B43, 93E20

\end{abstract}

\section{Introduction}

Consider the following tournament: each player (indexed by $i\in\{1,\ldots, N\}$) exerts an effort, which we denote by $a_i$, to move the value of her project/state, which is modeled as a drifted Brownian motion:
\[X_{i,t}=x_{0}+\int_0^t a_{i,s}ds+\sigma B_{i,t}.\]
We assume $B_1, \ldots, B_N$ are independent. 
The cost of effort per unit time is assumed to be quadratic in $a_{i}$ with coefficient $c$. The game ends at time $T>0$, when each player receives a reward that is a deterministic function of three components:
\begin{itemize}[noitemsep]
\item Her terminal project value $X_{i,T}$; 
\item The ranking of $X_{i,T}$ relative to other players, measured by the fraction $\frac{1}{N}\sum_{j=1}^N 1_{\{X_{j,T}\le X_{i,T}\}}$ of players having equal or worse performance (so that the top performer has rank one and the bottom performer has rank $1/N$); 
\item Statistics of the population performance, such as population mean $\frac{1}{N}\sum_{j=1}^N X_{j,T}$ or the $k$-th order statistic of $X_{1,T},\ldots, X_{N,T}$ or both. This allows us to cover the case when the ``reward pie" is not fixed, but grows with the total production or the $k$-th best performance. 
For simplicity of the presentation, we only consider dependence via the population mean.
\end{itemize}

In this paper, we will analyze the \emph{mean field game} associated with the above $N$-player game, and explicitly characterize the equilibrium (see Section~\ref{sec:NE}), improving on the results of Bayraktar and Zhang \cite{BZ16} which dealt only with the abstract existence and uniqueness of the mean-field equilibrium. Analysis of mean-field games is useful in solving $N$-player games when $N$ is large, since it has been shown in Bayraktar and Zhang \cite{BZ16} that the mean-field equilibrium can be used to construct an approximate Nash equilibria for the finite player games. 

Our explicit characterization, which is rare in mean field games, allows us to solve \emph{tournament design problems}. Specifically, we determine in Section~\ref{sec:tourn} the reward functions that maximize the rank-$\alpha$ performance, the net profit (for the tournament planner), and the total effort, respectively. Moreover, in Section~\ref{sec:PoA}, we also compute the so-called \emph{price of anarchy} which measures the efficiency loss due to decentralization; see e.g. Lacker and Ramanan \cite{doi:10.1287/moor.2018.0929}, Carmona et al.\ \cite{refId0}, and Cardaliaguet and Rainer \cite{CRSICON}.

Mean field games, introduced simultaneously by  Lasry and Lions \cite{LasryLions.06a,LasryLions.06b,LasryLions.07}, and Huang, Caines and Malham\'{e} \cite{HuangMalhameCaines.06, HuangMalhameCaines.07} (see also the two-volume book of Carmona and Delarue \cite{CarmonaDelaRue.17ab} for an extensive overview), analyze games with a large number of players which are weakly interacting through their empirical distribution. The main appeal of the mean field games is the decentralized structure of their equilibria: agents compute their best response to a given population distribution, which is then determined by a fixed point problem. The best response calculation is a pure stochastic control problem. Instead of working with the Hamilton-Jacobi-Bellman equation, we perform the calculation using Schr\"odinger bridges which can be seen as the stochastic analogue of quadratic optimal transport. (See L\'{e}onard \cite{Leonard14} and Chen et al.\ \cite{CGP16} for an overview of Schr\"odinger bridges and their connection to optimal transport.) We first introduce an auxiliary terminal distribution for the state (to convert the problem to an optimal transport problem), and then optimize over all such terminal distributions.
This approach allows us to reformulate the best response problem as a static calculus of variation problem,
which we then explicitly solve. This leads us to the next stage, the fixed point equation, whose solutions can be explicitly determined through its quantiles.

The distinguishing feature of our mean field game, i.e., tournament, is the rank-based feature of the reward. In particular, each player is rewarded according to the ranking of the terminal value of their project relative to the population, subject to cost of effort. This makes the analysis of the problem more difficult since the mean field interaction is non-local in the measure and the rank function is not regular. This problem was suggested by Gu\'{e}ant et al.\ \cite{GueantLasryLions.11} as a model in oil production, analyzed using abstract tools in the weak formulation by Carmona and Lacker \cite{LackerAAP} and in the strong formulation by Bayraktar and Zhang \cite{BZ16}. In these works continuity with respect to the rank was assumed. Related tournament games where the players are ranked according to their completion times have been considered by Bayraktar et al.\ \cite{BCZ18} for controlled Brownian motion dynamics and by Nutz and Zhang \cite{MZ17} for one-stage Poisson dynamics with controlled jump intensity. In the Appendix we are going to construct an extension of Schr\"{o}dinger bridges from space to time which can then be applied to construct the equilibrium in Bayraktar et al.\ \cite{BCZ18} as well. 

In economics there is a substantial literature on tournaments, going back to Lazear and Rosen \cite{LazearRosen81}; see Bayraktar et al.\ \cite{BCZ18} for a review. Most of these works focus on finitely many players or static models. Using such a one shot model, Fang et al.\ \cite{FNS18} analyze the discouraging effects of inequality. In our paper we observe a similar phenomenon, in that the more unequal the reward is (in the \emph{Lorenz order}, see e.g.\ Marshall et al.\ \cite{MajorizationTheory}) the smaller the game value for each player. However, unlike in the work of Fang et al.\ \cite{FNS18}, the same is not true for the effort in our set-up: the most fair distribution induces agents to put forth zero effort. Hence, one of the questions in the mechanism design section we investigate is what reward function maximizes the cumulative effort. We also analyze the case when agents have a social planner doing the optimization, which is used in computing the price of anarchy.  

The rest of the paper is organized as follows: In Section~\ref{sec:single} we consider the single player's problem and find her best response using Schr\"{o}dinger bridges. We then explicitly compute the mean field equilibrium in Section~\ref{sec:NE} and show the effect of reward inequality on the well-being of the players in Section~\ref{sec:Strack}. Section~\ref{sec:tourn} is where we investigate the tournament design problems with respect to several criteria. In Section~\ref{sec:PoA} we compute the price of anarchy. Finally, in Appendix~\ref{sec:Appen} we show how one can adapt the Schr\"odinger bridge approach to the completion time ranking game of Bayraktar et al.\ \cite{BCZ18}.

\section{A single player's problem}\label{sec:single}

Let us first describe the incentives of the player: We call $R(x,r,m):\bbR \times [0,1]\times \bbR \rightarrow \bbR\cup\{\pm\infty\}$ a
reward function if it is increasing in all of its arguments\footnote{Throughout the paper, increasing and decreasing are understood in the weak sense.}, $\bbR$-valued if $r\in(0,1)$, and satisfies $\int_0^1 R(x,r,m)dr<\infty$ for all $(x,m)\in\bbR^2$.
Denote the set of reward functions by $\mathcal{R}$ and the set of bounded reward functions by $\mathcal{R}_b$.

Given the distribution $\tilde \mu\in\mathcal{P}(\bbR)$ of the terminal project value of the population, we wish to find the best response to $\tilde \mu$ for a representative player. For any $\mu\in\mathcal{P}(\bbR)$, write $F_\mu$ for the cumulative distribution function (c.d.f.) of $\mu$ and $R_\mu(x)$ for $R(x, F_\mu(x), \int_{\bbR} y d\mu(y))$. A representative player with cost parameter $c$ solves the following stochastic control problem:
\begin{equation}\label{eq:original-control-prob}
\sup_{a} E\left[R_{\tilde \mu}(X_T)-\int_0^T c a_t^2 dt \right] \quad \text{where}\quad X_t= x_0+\int_0^t a_s ds+\sigma B_t.
\end{equation}
Here $a$ is admissible if it is progressively measurable and satisfies $E\int_0^T |a_s| ds<\infty$. 
%
Different from Bayraktar and Zhang \cite{BZ16}, let us consider the weak formulation of the above problem, which has some interesting connection with optimal transport.

Let $\Omega=C([0,T],\mathbb{R})$ be the Wiener space and $\mathbb{W}_x$ be the Wiener measure starting at $x$ at $t=0$. Under $\bbW_0$, the canonical process $\omega_t$ is a Brownian motion, and thus $X_{t}:=x_0+\sigma \omega_t$ represents the project value process under zero effort. Let $P$ be the law of $X$ under $\bbW_0$, and $(\mathcal{F}_t)_{t\in[0,T]}$ be the filtration generated by the process $\omega_t$. 

For any $Q\in \mathcal{P}(\Omega)$ such that $Q\sim P$, the Girsanov theorem implies that we can find an adapted process $a_t$ such that 
\[dX_t=a_t dt+\sigma dB^Q_t\] 
for some $Q$-Brownian motion $B^Q$. Conversely, given any sufficiently integrable adapted process $a_t$, we can define $Q\sim P$ such that the above equation holds. This means that if we restrict ourselves to sufficiently integrable effort process, we can identify $\mathcal{Q}:=\{Q\in\mathcal{P}(\Omega): Q\sim P\}$ with the set of laws of the controlled project value process $X$. Moreover, let $H(\cdot |\cdot)$ denote the relative entropy, with the convention that $H(Q|P)=\infty$ if $Q$ is not absolutely continuous with respect to $P$. We have
\begin{equation}\label{eq:entropy}
H(Q|P):=E^Q\left[\ln\left(\frac{dQ}{dP}\right)\right]=\frac{1}{2c\sigma^2}E^Q\left[\int_0^T c a_t^2 dt\right].
\end{equation}
Thus, we take the following as our definition of the single player's control problem:
\begin{equation}\label{eq:Vweak}
V(R,\tilde \mu):=\sup_{Q\in\mathcal{Q}} E^Q\left[R_{\tilde \mu}(X_T)\right]-2c\sigma^2 H(Q|P), \quad X_t=x_0+\sigma \omega_t.
\end{equation}

\begin{remark}
Here to keep notation simple, we define the filtration to be the one generated by the canonical process, but similar to Bayraktar et al.\ \cite[Remark 2.1]{BCZ18}, all results remain valid if we take $(\mathcal{F}_t)$ to be a larger filtration for which $\omega_t$ remains a Brownian motion.
\end{remark}


\subsection{Reduction via Schr\"odinger bridges}

Let  $X_t=x_0+\sigma \omega_t$ and $P=\bbW_0\circ X^{-1}$ as before; $P$ will serve as our reference measure.\footnote{In standard Schr\"odinger bridge problem, the reference measure $P$ is usually taken to be the stationary Wiener measure $\int \mathbb{W}_x dx$, but the disintegration argument works for any non-zero, non-negative, $\sigma$-finite measure on $\Omega$.} For any $Q\in\mathcal{P}(\Omega)$, write $Q_t$ for the time-$t$ marginal of $Q$, and $Q_{0,T}$ for the joint distribution of $Q$ at time $0$ and $T$.
Given a source distribution $\nu=\delta_{x_0}$ and a target distribution $\mu$, the Schr\"odinger bridge problem looks for an entropy-minimizing transport from $\nu$ to $\mu$:
\begin{equation}\label{dynamicSBP}
\underset{Q\in\mathcal{P}(\Omega)}{\text{minimize}}\quad H(Q|P) \quad \text{subject to}\quad Q_0=\nu, \quad Q_T=\mu,
\end{equation}
It is known, by a simple disintegration, that the solution to the Schr\"odinger bridge problem is given by (see F\"{o}llmer \cite{10.1007/BFb0086180} and also L\'{e}onard \cite{Leonard14}, Chen et al.\ \cite{CGP16})
\[Q^*(\cdot)=\int_{\mathbb{R}^2} P^{x,y}(\cdot) \pi^* (dx,dy),\]
where $P^{x,y}:=P(\cdot | X_0=x, X_T=y)$ is the law of a scaled Brownian bridge (scaled by $\sigma$), and $\pi^*$ is the solution to the following static optimization, assuming it exists:
\begin{equation}\label{staticSBP}
\underset{\pi\in\mathcal{P}(\mathbb{R}^2)}{\text{minimize}}\quad H(\pi|P_{0,T}) \quad \text{subject to}\quad \pi_0=\nu, \quad \pi_T=\mu.
\end{equation}
In addition, it holds that $H(Q^*|P)=H(\pi^*| P_{0,T})$. 
Since $\nu=P_0=\delta_{x_0}$, the static problem \eqref{staticSBP} is trivial, giving a minimum entropy of 
$H(\mu | \mathcal{N}(x_0, \sigma^2 T))$.

Going back to our control problem \eqref{eq:Vweak}, by splitting the optimization over $Q$ to a maximization over its time-$T$ marginal plus a constrained entropy minimization, we can utilize the 
equivalence between \eqref{dynamicSBP} and \eqref{staticSBP} and obtain
\begin{align*}
V(R,\tilde \mu)&= \sup_{\mu\in\mathcal{P}(\bbR): \,\mu\sim P_T}\sup_{Q\in\mathcal{Q}: \,Q_T=\mu} \int_\bbR R_{\tilde \mu}(x)d\mu(x)-2c\sigma^2 H(Q|P)\\
&\le  \sup_{\mu\in\mathcal{P}(\bbR): \,\mu\sim P_T}\sup_{Q\in\mathcal{P}(\Omega): Q_0=\nu, Q_T=\mu} \int_\bbR R_{\tilde \mu}(x)d\mu(x)-2c\sigma^2 H(Q|P)\\
&=\sup_{\mu\in\mathcal{P}(\bbR): \,\mu\sim P_T}\int_\bbR R_{\tilde \mu}(x)d\mu(x)-2c\sigma^2 \inf_{Q\in\mathcal{P}(\Omega): \,Q_0=\nu, \,Q_T=\mu}  H(Q|P)\\
&=\sup_{\mu\in\mathcal{P}(\bbR): \,\mu\sim P_T}\int_\bbR R_{\tilde \mu}(x)d\mu(x)-2c\sigma^2 \inf_{\pi\in\mathcal{P}(\bbR^2): \,\pi_0=\nu, \,\pi_T=\mu}  H(\pi|P_{0,T})\\
&=\sup_{\mu\in\mathcal{P}(\bbR): \,\mu\sim P_T} \int_\bbR R_{\tilde \mu}(x)d\mu(x)-2c\sigma^2 H(\mu| \mathcal{N}(x_0, \sigma^2 T)).
\end{align*}
Since $\int_{\mathbb{R}^2} P^{x_0,y}(\cdot) \mu (dy)\in\mathcal{Q}$ for any $\mu\sim P_T$, the inequality is in fact an equality. Thus,
\begin{equation}\label{eq:agent-problem2}
V(R,\tilde\mu)=\sup_{\mu\in\mathcal{P}(\bbR):\, \mu\sim P_T} \int_\bbR R_{\tilde \mu}(x)d\mu(x)-2c\sigma^2 H(\mu| \mathcal{N}(x_0, \sigma^2 T)).
\end{equation}

Let $\varphi$ be the standard normal probability density function (p.d.f.) and introduce
\begin{equation}\label{eq:f0}
f_0(x):=\frac{1}{\sigma\sqrt{T}}\varphi\left(\frac{x-x_0}{\sigma\sqrt{T}}\right).
\end{equation}
We finally arrive at a constrained calculus of variation problem over the p.d.f.\ of $\mu$:
\begin{equation}\label{eq:agent-problem2b}
V(R,\tilde \mu)=\sup_{f_{\mu}> 0: \,\int_{\bbR} f_{\mu}(x) dx=1}\int_\bbR \left\{R_{\tilde \mu}(x)  -2c\sigma^2 \ln\left(\frac{f_{\mu}(x)}{f_0(x)}\right)\right\} f_{\mu}(x)dx.
\end{equation}
which can be easily solved by the method of Lagrange multipliers.\footnote{Alternatively, one can directly drop the integral constraint, by observing that any non-negative function $f\not\equiv 0$ can be normalized to have integral equal to one.} The solution is provided below without proof. Once we find the optimal marginal $\mu^*$, we can recover $Q^*$ by 
$Q^*(\cdot)=\int_{\mathbb{R}^2} P^{x_0,y}(\cdot) \mu^* (dy).$

\begin{prop}\label{prop:best-response}
Given $R\in\mathcal{R}$ and $\tilde\mu \in\mathcal{P}(\bbR)$. Let 
\begin{equation*}
\beta(\tilde \mu):=\int_\bbR f_0(y)\exp\left(\frac{R_{\tilde \mu}(y)}{2c\sigma^2}\right) dy,
\end{equation*}
where $f_0$ is defined in \eqref{eq:f0}. Suppose $\beta(\tilde \mu)<\infty$. Then the optimal terminal distribution $\mu^*$ of the single player has p.d.f.
\begin{equation}\label{eq:fmu}
f_{\mu^*}(x)=\frac{1}{\beta(\tilde \mu)}f_0(x)\exp\left(\frac{R_{\tilde \mu}(x)}{2c\sigma^2}\right).
\end{equation}
The optimal value is given by $V(R,\tilde \mu)=2c\sigma^2 \ln \beta(\tilde \mu).$
\end{prop}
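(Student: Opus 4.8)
The plan is to bypass the heuristic Euler--Lagrange/Lagrange-multiplier computation and instead recognize \eqref{eq:agent-problem2b} as an instance of the Gibbs variational principle, so that the optimizer can be read off by \emph{completing the relative entropy}. Write $\lambda:=2c\sigma^2$ and recall $\beta(\tilde\mu)=\int_\bbR f_0(y)\exp(R_{\tilde\mu}(y)/\lambda)\,dy$. The first step is to introduce the candidate density $f_{\mu^*}(x)=\beta(\tilde\mu)^{-1} f_0(x)\exp(R_{\tilde\mu}(x)/\lambda)$ from \eqref{eq:fmu}; under the standing assumption $\beta(\tilde\mu)<\infty$ (and since $\beta(\tilde\mu)>0$ trivially) this is a genuine probability density, and it is strictly positive everywhere because $f_0>0$ and the exponential is positive. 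In particular $\mu^*\sim\mathcal{N}(x_0,\sigma^2 T)=P_T$, so $f_{\mu^*}$ is admissible for the supremum.

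The key algebraic step is to rewrite the integrand of \eqref{eq:agent-problem2b} in terms of $f_{\mu^*}$. For any admissible $f_\mu$, using $R_{\tilde\mu}=\lambda\ln\exp(R_{\tilde\mu}/\lambda)$ and $f_0\exp(R_{\tilde\mu}/\lambda)=\beta(\tilde\mu)f_{\mu^*}$ one obtains
\begin{align*}
\int_\bbR\Big\{R_{\tilde\mu}-\lambda\ln\frac{f_\mu}{f_0}\Big\}f_\mu\,dx
&=\lambda\int_\bbR f_\mu\ln\frac{f_0\exp(R_{\tilde\mu}/\lambda)}{f_\mu}\,dx
=\lambda\int_\bbR f_\mu\ln\frac{\beta(\tilde\mu)f_{\mu^*}}{f_\mu}\,dx\\
&=\lambda\ln\beta(\tilde\mu)-\lambda\int_\bbR f_\mu\ln\frac{f_\mu}{f_{\mu^*}}\,dx
=\lambda\ln\beta(\tilde\mu)-\lambda\,H(\mu\,|\,\mu^*),
\end{align*}
where the third equality uses $\int_\bbR f_\mu\,dx=1$. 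Since the relative entropy satisfies $H(\mu\,|\,\mu^*)\ge 0$ with equality if and only if $\mu=\mu^*$ (Gibbs' inequality, equivalently Jensen for the concave function $\ln$), the supremum over admissible $f_\mu$ is attained uniquely at $f_\mu=f_{\mu^*}$, and its value equals $\lambda\ln\beta(\tilde\mu)=2c\sigma^2\ln\beta(\tilde\mu)$. This simultaneously produces the optimal marginal \eqref{eq:fmu} and the value formula.

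The step I expect to require the most care is justifying this chain when $R_{\tilde\mu}$ is $\pm\infty$-valued at the boundary (recall $R$ is only guaranteed $\bbR$-valued for $r\in(0,1)$) and when $\int R_{\tilde\mu}\,f_\mu$ or $\int f_\mu\ln(f_\mu/f_0)$ diverge individually, creating a potential $\infty-\infty$ ambiguity. The resolution is precisely the grouping into a single relative-entropy term: I would restrict attention to densities for which the objective is well defined and note that this is no loss, since at the candidate the two problematic terms cancel identically, $R_{\tilde\mu}-\lambda\ln(f_{\mu^*}/f_0)=\lambda\ln\beta(\tilde\mu)$, leaving the finite value $\lambda\ln\beta(\tilde\mu)$. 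For a general admissible $f_\mu$, the integrand $f_\mu\ln(f_\mu/f_{\mu^*})$ is bounded below by the integrable quantity $f_\mu-f_{\mu^*}$ (from $u\ln u\ge u-1$ with $u=f_\mu/f_{\mu^*}$), so $H(\mu\,|\,\mu^*)\in[0,\infty]$ is unambiguously defined and the displayed identity holds with values in $[-\infty,\lambda\ln\beta(\tilde\mu)]$, forcing the supremum to be $\lambda\ln\beta(\tilde\mu)$. Finally, dropping the normalization constraint is harmless, exactly as observed in the footnote following \eqref{eq:agent-problem2b}, which completes the reduction.
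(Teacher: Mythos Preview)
Your argument is correct. The paper itself does not spell out a proof of this proposition: it states that \eqref{eq:agent-problem2b} ``can be easily solved by the method of Lagrange multipliers'' (with a footnote observing that the normalization constraint may be dropped) and then records the formulas without further justification. Your route is different in spirit: rather than writing down a first-order condition and then having to argue separately that the resulting critical point is a global maximizer, you recognize the problem as a Gibbs variational principle and complete the relative entropy, obtaining the exact identity
\[
\int_\bbR\Big\{R_{\tilde\mu}-\lambda\ln\tfrac{f_\mu}{f_0}\Big\}f_\mu\,dx=\lambda\ln\beta(\tilde\mu)-\lambda\,H(\mu\,|\,\mu^*).
\]
This yields optimality, uniqueness, and the value in one stroke, and your handling of the potential $\infty-\infty$ issue via the elementary bound $u\ln u\ge u-1$ is the right way to make the identity rigorous when $R_{\tilde\mu}$ may be unbounded. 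The Lagrange-multiplier route the paper alludes to would arrive at the same $f_{\mu^*}$ but would still owe a verification of global optimality; your approach absorbs that step automatically.
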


\begin{remark}
The Schr\"odinger bridge approach can also be adapted to the hitting time ranking game of Bayraktar et al.\ \cite{BCZ18}.
This calls for a variant of the Schr\"odinger bridge problem where the target distribution is not the time-$T$ marginal, but the law of first passage time of level zero. We detail this digression in the appendix for the interested readers.
\end{remark}

\begin{remark}
The model assumption that $X_T$ has a Gaussian $\mathbb{W}_0$-density is not essential in the best-response step; what matters is that the cost of effort can be written as a relative entropy $H(Q|P)$ where $P$ and $Q$ are the laws of the state process $X$ corresponding to zero effort and general effort, respectively. Suppose the $\mathbb{W}_0$-distribution of $X_T$ is $\mu_0$. Using the equivalence between the dynamic and static Schr\"odinger bridge problems, equations \eqref{eq:agent-problem2} and \eqref{eq:agent-problem2b} hold with $\mathcal{N}(x_0,\sigma^2 T)$ replaced by $\mu_0$ and $f_\mu$ replaced by $z=d\mu/d\mu_0$:
\begin{align*}
V(R,\tilde\mu)&=\sup_{\mu\in\mathcal{P}(\bbR):\, \mu\sim \mu_0} \int_\bbR R_{\tilde \mu}(x)d\mu(x)-2c\sigma^2 H(\mu|\mu_0)\\
&=\sup_{z(x)>0, \int z(x)d\mu_0(x)=1} \int_\bbR \left(R_{\tilde \mu}(x)-2c\sigma^2 \ln z(x) \right) z(x) d\mu_0(x).
\end{align*}
Using the method of Lagrange multipliers, one finds that the optimal $\mu^*$ is given by
\[\frac{d\mu^*}{d\mu_0}(x)=\frac{\exp\left(\frac{R_{\tilde \mu}(x)}{2c\sigma^2}\right)}{\int \exp\left(\frac{R_{\tilde \mu}(y)}{2c\sigma^2}\right) d\mu_0(y)},\]
which is similar to \eqref{eq:fmu}.
The derivation of the fixed point (see the proof of Theorem~\ref{thm:NE}), on the other hand, relies on the existence of a density, but not on the Gaussian property. 
\end{remark}

\subsection{Optimal effort}
The Schr\"odinger bridge approach allows us to compute the optimal target distribution easily, which is all we need to analyze equilibrium measures (see Section~\ref{sec:NE} for details). On the other hand, to get a more explicit description of the optimal effort, ideally as a feedback function $a^*(t,x)$ of time and state, we still need to go back to the dynamic control formulation of the Schr\"odinger bridge problem. We can utilize some existing results in, for example, Chen et al.\ \cite{CGP16}.


Recall that under the reference measure $P$, the canonical process is a scaled Brownian motion with transition density
\[p(t,x, s,y):=\frac{1}{\sigma\sqrt{s-t}}\varphi\left(\frac{y-x}{\sigma\sqrt{s-t}}\right).\]
Note that $f_0(y)=p(0,x_0,T,y)$. Define
\[\psi(t,x):=\int_\bbR p(t,x,T,y) \frac{f_{\mu^*}(y)}{f_0(y)}dy, \quad \hat \psi(t,x)=p(0,x_0,t,x).\]
It can be easily checked that $\psi, \hat \psi$ satisfy $\psi(0,x)\hat\psi(0,x)=\delta_{x_0}(x)$, $\psi(T,x)\hat\psi(T,x)=f_{\mu^*}(x)$,
\[\psi(t,x)=\int_\bbR p(t,x,T,y) \psi(T,y) dy, \quad \text{and}\quad \hat \psi(t,x)=\int_\bbR p(0,y,t,x) \hat \psi(0,y) dy.\]
By Chen et al.\ \cite[p.\ 679-680]{CGP16}, the optimal coupling $Q^*$ has Markovian drift $a^*$ given by
\[a^*(t,x)=\sigma^2 \partial_x \ln \psi(t,x).\footnote{Chen et al.\ \cite{CGP16} assumes $\sigma=1$, but the proof can be easily generalized to nonzero $\sigma$.}\]
Using \eqref{eq:fmu}, we obtain
\[\psi(t,x)=\frac{1}{\beta(\tilde \mu)} E\left[\exp\left(\frac{R_{\tilde \mu}(x+\sigma\sqrt{T-t}Z)}{2c\sigma^2}\right)\right], \quad Z\sim \mathcal{N}(0,1).\]
Comparing with Bayraktar and Zhang \cite[eq.\ (3.3)]{BZ16}, we see that $u(t,x):=\beta(\tilde \mu) \psi(t,x)$ is precisely the Cole-Hopf transformation of the value function of the original control problem \eqref{eq:original-control-prob}. Replacing $\psi$ by $u$, we recover the same optimal Markovian control as Bayraktar and Zhang \cite{BZ16}:
\begin{equation}\label{eq:astar}
a^*(t,x)=\sigma^2 \frac{u_x(t,x)}{u(t,x)}.
\end{equation}
When $R$ is bounded, it is shown in Bayraktar and Zhang \cite{BZ16} that $\lim_{x\rightarrow \pm \infty} a^*(t,x)=0$, meaning players show slackness when having a very big lead, and give up when falling far behind.

\begin{remark}
For bounded rewards, Bayraktar and Zhang \cite{BZ16} also showed that the controlled diffusion $dX_t=a^*(t,X_t)dt+\sigma dB_t$ in fact has a unique strong solution. From there,  one can mimic the change of measure technique in Bayraktar et al.\ \cite{BCZ18} to obtain the optimal terminal distribution 
\eqref{eq:fmu}. 
An advantage of the weak formulation, beside the connection to optimal transport theory, is that it avoids the hassle of having to verify the regularity of $a^*$ near $x=0$.
\end{remark}

\section{Characterization of equilibrium}\label{sec:NE}

We say $\mu\in\mathcal{P}(\bbR)$ is an \emph{equilibrium (terminal distribution)} if it is a fixed point of the best response mapping: $\tilde \mu \mapsto  Q_T$, where $Q\in\mathcal{Q}$ is the optimal control for $V(R,\tilde\mu)$. By \eqref{eq:fmu}, we have the following characterization for general rewards functions.

\begin{thm}\label{thm:NE0}
Let $R\in\mathcal{R}$ and $\mu\in\mathcal{P}(\bbR)$ satisfy 
\[\beta(\mu)=\int_\bbR f_0(y)\exp\left(\frac{R_{\mu}(y)}{2c\sigma^2}\right) dy<\infty.\]
(The above condition always holds when $R\in\mathcal{R}_b$.)
Then $\mu$ is an equilibrium if and only if it has a strictly positive density satisfying
\begin{equation}\label{eq:fixed-point}
f_{\mu}(x)=\frac{1}{\beta(\mu)}f_0(x)\exp\left(\frac{R_{\mu}(x)}{2c\sigma^2}\right).
\end{equation}
The associated game value is given by $V(R,\mu)=2c\sigma^2 \ln \beta(\mu)$.
\end{thm}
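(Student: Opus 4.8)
The plan is to read the characterization directly off the best-response formula in Proposition~\ref{prop:best-response}, since by definition an equilibrium is a distribution that reproduces itself under the best-response map $\tilde\mu\mapsto Q_T$. The only genuine content is to verify three things: that the fixed-point condition coincides with \eqref{eq:fixed-point}; that an equilibrium is forced to carry a strictly positive density; and that the best-response map is single-valued, so that ``fixed point'' means an equation rather than a set-valued inclusion.

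The first step I would carry out is single-valuedness of the best-response map on $\{\tilde\mu:\beta(\tilde\mu)<\infty\}$. This follows from the variational reformulation \eqref{eq:agent-problem2b}: the functional $f\mapsto\int\{R_{\tilde\mu}(x)-2c\sigma^2\ln(f(x)/f_0(x))\}f(x)\,dx$ is strictly concave, because $\int R_{\tilde\mu}f\,dx$ is linear in $f$ while $-2c\sigma^2\int f\ln(f/f_0)\,dx$ is strictly concave (the map $f\mapsto f\ln f$ being strictly convex and $\int f\ln f_0\,dx$ linear). Hence its maximizer over the convex set $\{f>0,\int f=1\}$ is unique, and by Proposition~\ref{prop:best-response} it is the density \eqref{eq:fmu}. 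Thus for each admissible $\tilde\mu$ the terminal law $Q_T$ of the optimal $Q\in\mathcal{Q}$ is the unique $\mu^\ast$ with $f_{\mu^\ast}=\beta(\tilde\mu)^{-1}f_0\exp(R_{\tilde\mu}/2c\sigma^2)$. Since this density is strictly positive, $\mu^\ast\sim P_T=\mathcal{N}(x_0,\sigma^2T)$ automatically, so the equivalence constraint implicit in \eqref{eq:Vweak}--\eqref{eq:agent-problem2b} is met for free and never has to be checked separately.

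Both implications are then immediate. For the ``only if'' direction, suppose $\mu$ is an equilibrium; since $\beta(\mu)<\infty$ by hypothesis, Proposition~\ref{prop:best-response} applies with $\tilde\mu=\mu$ and identifies the best response to $\mu$ as the measure with density $\beta(\mu)^{-1}f_0\exp(R_\mu/2c\sigma^2)$. Being a fixed point, $\mu$ equals this best response, so $\mu$ has exactly this strictly positive density, which is precisely \eqref{eq:fixed-point}. For the ``if'' direction, if $\mu$ has a strictly positive density satisfying \eqref{eq:fixed-point}, then by the uniqueness established above $\mu$ is its own best response, hence a fixed point. The game value $V(R,\mu)=2c\sigma^2\ln\beta(\mu)$ follows by evaluating the value formula of Proposition~\ref{prop:best-response} at $\tilde\mu=\mu$.

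I expect the main point requiring care to be the well-definedness of $R_\mu(x)=R(x,F_\mu(x),\int y\,d\mu(y))$ along the argument rather than any deep analytic difficulty. Because $R$ is increasing in each coordinate and $F_\mu$ is a genuine c.d.f., the map $x\mapsto R_\mu(x)$ is measurable, and strict positivity of the candidate density makes $F_\mu$ continuous and $\mu$ fully supported; one must, however, ensure the population mean $\int y\,d\mu(y)$ in the third slot is finite so that $R_\mu$ is $\mathbb{R}$-valued, which I would treat as a standing requirement on the candidate $\mu$ (consistent with $R$ taking a real mean as its third argument). A secondary, purely expository subtlety is that \eqref{eq:fixed-point} is an implicit self-consistency equation, since both $\beta(\mu)$ and $R_\mu$ depend on $\mu$ through $F_\mu$ and its mean; the theorem asserts only that solving this equation is equivalent to being an equilibrium, and I would stress that the existence and explicit construction of solutions is deferred to the quantile-based analysis in the results that follow.
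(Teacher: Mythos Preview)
Your proposal is correct and follows the same route as the paper, which treats this theorem as an immediate consequence of Proposition~\ref{prop:best-response} and the definition of an equilibrium as a fixed point of the best-response map (the paper does not even write out a separate proof). Your additional care about uniqueness of the maximizer via strict concavity and well-definedness of $R_\mu$ is more explicit than the paper's one-line justification, but the underlying argument is identical.
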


Specializing to the subclass of reward functions
\[\mathcal{R}_b^{rm}:=\{R\in\mathcal{R}_b: \text{ $R(x,r,m)$  is independent of $x$ and continuous in $m$}\},\]
we obtain a semi-explicit characterization.

\begin{thm}\label{thm:NE}
Suppose $R\in\mathcal{R}_b^{rm}$. Then there exists at least one equilibrium. $\mu\in\mathcal{P}(\bbR)$ is an equilibrium terminal distribution of the project value if and only if its quantile function $q_\mu$ satisfies
\begin{equation}\label{eq:q_mu}
q_\mu(r)=x_0+\sigma \sqrt{T} N^{-1}\left(\frac{\int_0^r \exp\left(-\frac{R(z,m_\mu)}{2c\sigma^2}\right)dz}{\int_0^1 \exp\left(-\frac{R(z,m_\mu)}{2c\sigma^2}\right)dz}\right),
\end{equation}
where $N(\cdot)$ is the standard normal c.d.f.\ and $m_\mu=\int_{-\infty}^\infty y d\mu(y)$ is a solution of
\begin{equation}\label{eq:m}
m=x_0+\sigma\sqrt{T}\int_0^1 N^{-1}\left(\frac{\int_0^r \exp\left(-\frac{R(z,m)}{2c\sigma^2}\right)dz}{\int_0^1 \exp\left(-\frac{R(z,m)}{2c\sigma^2}\right)dz}\right)dr.
\end{equation}
The associated game value is given by
\begin{equation*}\label{eq:game-value}
V(R,\mu)=2c\sigma^2 \ln \beta(\mu)=-2c\sigma^2 \ln\left(\int_0^1 \exp\left(-\frac{R(z,m_\mu)}{2c\sigma^2}\right) dz\right).
\end{equation*}
\end{thm}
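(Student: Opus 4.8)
The plan is to reduce everything to the fixed-point density equation \eqref{eq:fixed-point} supplied by Theorem~\ref{thm:NE0}, and to exploit the special structure $R\in\mathcal{R}_b^{rm}$, under which the reward is independent of $x$, so that $R_\mu(x)=R(F_\mu(x),m_\mu)$ with $m_\mu=\int_\bbR y\,d\mu(y)$. The key observation is that \eqref{eq:fixed-point} is separable: writing $G(x):=N\big(\tfrac{x-x_0}{\sigma\sqrt T}\big)$ for the c.d.f.\ of the reference law $\mathcal N(x_0,\sigma^2T)$ (so $G'=f_0$ by \eqref{eq:f0}), I would rearrange \eqref{eq:fixed-point} as $f_0(y)\,dy=\beta(\mu)\,e^{-R(F_\mu(y),m_\mu)/(2c\sigma^2)}\,dF_\mu(y)$ and integrate. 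Because Theorem~\ref{thm:NE0} already guarantees a strictly positive density, $F_\mu$ is continuous and strictly increasing, so the substitution $r=F_\mu(y)$ is legitimate and avoids any appeal to pointwise differentiability of the (only monotone, possibly discontinuous-in-$r$) reward.

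For the necessity direction, integrating the displayed relation from $-\infty$ to $x$ gives $G(x)=\beta(\mu)\int_0^{F_\mu(x)}\exp(-R(r,m_\mu)/(2c\sigma^2))\,dr$. Letting $x\to\infty$ forces $\beta(\mu)=\big(\int_0^1\exp(-R(r,m_\mu)/(2c\sigma^2))\,dr\big)^{-1}$, which is exactly the claimed game value $V(R,\mu)=2c\sigma^2\ln\beta(\mu)$. Substituting this back and solving $G(x)=\cdots$ for $x=q_\mu(r)$ via $G^{-1}(y)=x_0+\sigma\sqrt T\,N^{-1}(y)$ yields the quantile formula \eqref{eq:q_mu}; finally, integrating $q_\mu$ over $r\in(0,1)$ and using $m_\mu=\int_0^1 q_\mu(r)\,dr$ reproduces the consistency equation \eqref{eq:m}.

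For sufficiency I would run this in reverse: given any $m$ solving \eqref{eq:m}, define $q_\mu$ by \eqref{eq:q_mu} and let $\mu$ be the induced law. Since the inner ratio is increasing in $r$ and $N^{-1}$ is increasing, $q_\mu$ is a genuine (strictly increasing) quantile function, so $\mu$ is well-defined; equation \eqref{eq:m} then guarantees $\int_0^1 q_\mu(r)\,dr=m$, i.e.\ $m_\mu=m$, closing the loop. Undoing the change of variables—differentiating the identity $G(q_\mu(r))=\int_0^r e^{-R(z,m)/(2c\sigma^2)}dz\big/\int_0^1 e^{-R(z,m)/(2c\sigma^2)}dz$—recovers precisely \eqref{eq:fixed-point}, so Theorem~\ref{thm:NE0} certifies that $\mu$ is an equilibrium.

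It remains to prove existence, i.e.\ that the mean map $\Phi(m)$ defined as the right-hand side of \eqref{eq:m} has a fixed point; this is where I expect the only real technical work. Boundedness of $R$ gives uniform bounds $a\le \exp(-R(z,m)/(2c\sigma^2))\le b$ with $a,b>0$ independent of $(z,m)$, whence $\tfrac{a}{b}\,r\le \text{(inner ratio)}\le 1-\tfrac{a}{b}(1-r)$; since $N^{-1}$ is integrable against both of these $m$-independent bounds near the endpoints $r=0,1$, the integrand $N^{-1}(\cdot)$ is dominated by a fixed integrable function. This simultaneously shows $\Phi$ is finite and uniformly bounded, and—combined with the assumed continuity of $R$ in $m$—yields continuity of $\Phi$ by dominated convergence. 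A bounded continuous $\Phi:\bbR\to\bbR$ makes $m\mapsto\Phi(m)-m$ change sign, so the intermediate value theorem produces the desired fixed point. The main obstacle is thus the uniform control of the $N^{-1}$ singularities at $r=0,1$; everything else is a direct integration of the separable fixed-point equation.
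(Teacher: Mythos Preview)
Your proposal is correct and follows essentially the same approach as the paper: reduce to the density fixed-point equation from Theorem~\ref{thm:NE0}, integrate/differentiate to obtain the quantile formula and the mean equation, and then establish existence by showing the mean map is bounded and continuous. The only cosmetic differences are that the paper differentiates $y(r)=N\big((q_\mu(r)-x_0)/(\sigma\sqrt T)\big)$ rather than integrating the density relation (your integration sidesteps the a.e.\ differentiability issue more cleanly), and the paper invokes Brouwer's fixed point theorem where your intermediate value argument suffices in one dimension.
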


\begin{proof}
Since $R$ is bounded, we only need to look for solutions of the fixed point equation \eqref{eq:fixed-point}. 
Let $y(\cdot)$ be the c.d.f.\ of the random variable $F_\mu(\mathcal{N}(x_0, \sigma^2 \sqrt{T}))$, i.e.
\[y(r)=N\left(\frac{q_\mu(r)-x_0}{\sigma\sqrt{T}}\right).\]
Since any fixed point $\mu$ has a positive density, we can differentiate $y(r)$ and use \eqref{eq:fixed-point} to get
\begin{align*}
y'(r)&=\varphi\left(\frac{q_\mu(r)-x_0}{\sigma\sqrt{T}}\right)\frac{1}{\sigma\sqrt{T}}\frac{1}{f_\mu(q_\mu(r))}=\frac{f_0(q_\mu(r))}{f_\mu(q_\mu(r))}\\
&=\beta(\mu)\exp\left(-\frac{R_\mu(q_\mu(r))}{2c\sigma^2}\right)=\beta(\mu)\exp\left(-\frac{R(r,m_\mu)}{2c\sigma^2}\right).
\end{align*}
Using $y(0)=0$ and $y(1)=1$, we find that
\[y(r)=\beta(\mu)\int_0^r \exp\left(-\frac{R(z,m_\mu)}{2c\sigma^2}\right)dz,\]
and
\[\beta(\mu)=\left(\int_0^1 \exp\left(-\frac{R(z,m_\mu)}{2c\sigma^2}\right)dz\right)^{-1}.\]
It follows that
\[N\left(\frac{q_\mu(r)-x_0}{\sigma\sqrt{T}}\right)=y(r)=\frac{\int_0^r \exp\left(-\frac{R(z,m_\mu)}{2c\sigma^2}\right)dz}{\int_0^1 \exp\left(-\frac{R(z,m_\mu)}{2c\sigma^2}\right)dz},\]
from which we get \eqref{eq:q_mu}. To determine $m_\mu$, we integrate \eqref{eq:q_mu} from $r=0$ to $r=1$ and use that $m_\mu=\int_0^1 q_\mu(r)dr$. This leads to equation \eqref{eq:m}. It remains to show that \eqref{eq:m}
has a solution.

Let $g(m)$ be the right hand side of \eqref{eq:m}. We want to show $g$ has a fixed point. Since $R$ is bounded, it can be shown that $C^{-1}\le y'(r)\le C$ where
\[ C=\exp\left(\frac{\| R(1,m)-R(0,m)\|_{\infty}}{2c\sigma^2}\right).\]
It follows that
\begin{align*}
\left|\frac{g(m)-x_0}{\sigma\sqrt{T}}\right|&=\left|\int_0^1 N^{-1}(y(r))dr\right|=\left|\int_0^1 N^{-1}(y) \frac{dy}{y'(r)}\right|\le C \int_0^1 \left| N^{-1}(y)\right| dy=C\sqrt{\frac{2}{\pi}}.
\end{align*}
So the range of $g$ is contained in a compact interval. Moreover, $g$ is continuous on this interval since $R$ is assumed to be continuous in $m$. By Brouwer's fixed point theorem, $g$ has a fixed point. 
\end{proof}

\begin{remark}
Observe that the equilibrium distribution $\mu$ does not change if we add any bounded function $\kappa(m)$ to the reward. In other words, any bounded compensation that is solely based on the mean performance of the population does not really incentivize the players.
\end{remark}

\begin{remark}\label{rmk:effort}
When $R\in \mathcal{R}^{rm}_b$ is further independent of $m$ (i.e.\ purely rank-based), the equilibrium is unique. In this case, the total effort of the population (or the expected cumulative effort of a representative player) is given by
\[A(R):=E\int_0^T a^*(t,X^*_t)dt=m_\mu-x_0=\sigma\sqrt{T}\int_0^1 N^{-1}\left(\frac{\int_0^r \exp\left(-\frac{R(z)}{2c\sigma^2}\right)dz}{\int_0^1 \exp\left(-\frac{R(z)}{2c\sigma^2}\right)dz}\right)dr.\]
\end{remark}

\begin{remark}\label{rmk:subclassNE}
If we confine ourselves to the subclass of equilibria which satisfy $\beta(\mu)<\infty$, 
then all results in this section can be restated with $R\in \mathcal{R}^{rm}$ which is obtained from $\mathcal{R}^{rm}_b$ by dropping the boundedness requirement.
\end{remark}

In the next two sections, we focus on bounded rewards that are purely rank-based:
\[\mathcal{R}_b^r:=\{R\in\mathcal{R}_b: R(x,r,m) \text{ is independent of $x$ and $m$}\}.\]
Each of these rewards induces a unique equilibrium, which facilitates the study of comparative statics and optimal reward design. In this case, we write $\mathcal{V}(R)$ for the unique game value.

\section{Effect of reward inequality}\label{sec:Strack}
\begin{defn}
Given two reward functions $R, \tilde R\in\mathcal{R}_b^r$, we say $R$ is more unequal than $\tilde R$ in Lorenz order (or $R$ majorizes $\tilde R$), written as $R\succ \tilde R$, if $\int_0^1 R(r)dr=\int_0^1 \tilde R(r)dr$ and
\[\int_0^z R(r)dr \le \int_0^z \tilde R(r)dr \quad \forall\, z\in [0,1].\]
\end{defn}

\begin{thm}
Suppose $R, \tilde R\in\mathcal{R}_b^r$ and $R\succ \tilde R$, then the associated game values satisfy $\mathcal{V}(R)\le \mathcal{V}(\tilde R)$; that is, reward inequality decreases the game value.
\end{thm}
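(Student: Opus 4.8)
The plan is to express the game value $\mathcal{V}(R)$ as an explicit functional of the reward $R$, then show this functional is order-reversing with respect to the Lorenz order. By the game value formula in Theorem~\ref{thm:NE} (specialized to purely rank-based rewards where the $m$-dependence drops out), we have
\begin{equation*}
\mathcal{V}(R)=-2c\sigma^2 \ln\left(\int_0^1 \exp\left(-\frac{R(z)}{2c\sigma^2}\right)dz\right).
\end{equation*}
Since $R\succ\tilde R$ implies $\mathcal{V}(R)\le\mathcal{V}(\tilde R)$ is equivalent (because $-2c\sigma^2\ln(\cdot)$ is decreasing) to
\begin{equation*}
\int_0^1 \exp\left(-\frac{R(z)}{2c\sigma^2}\right)dz \ge \int_0^1 \exp\left(-\frac{\tilde R(z)}{2c\sigma^2}\right)dz,
\end{equation*}
the whole theorem reduces to a single majorization inequality: if $R\succ\tilde R$ then $\int_0^1 \phi(R(z))\,dz \ge \int_0^1 \phi(\tilde R(z))\,dz$ for the convex function $\phi(u)=\exp(-u/(2c\sigma^2))$.

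This is exactly the classical characterization of majorization via convex functions (Hardy--Littlewood--P\'olya), adapted to the continuous/integral setting of the Lorenz order. So the first step is to recall or establish that inequality. The cleanest route I would take is integration by parts. Writing $G(z)=\int_0^z R(r)\,dr$ and $\tilde G(z)=\int_0^z \tilde R(r)\,dr$, the Lorenz-order hypothesis says $G\le\tilde G$ pointwise on $[0,1]$ with equality at the endpoints $z=0$ and $z=1$. Since both $R$ and $\tilde R$ are increasing (being reward functions in $\mathcal{R}_b^r$), their c.d.f.-like primitives are convex, and I can rewrite the difference $\int_0^1 [\phi(R)-\phi(\tilde R)]\,dz$ by integrating against $\phi'$, exploiting that $\phi'$ is monotone (increasing, since $\phi$ is convex). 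Concretely, one writes $\phi(R(z))-\phi(\tilde R(z)) = \int \phi''$ terms or, more directly, uses the formula $\int_0^1 \phi(R)\,dz - \int_0^1\phi(\tilde R)\,dz = \int_0^1 (\tilde G(z)-G(z))\,d\mu_\phi(z)$ for a suitable nonnegative measure coming from the convexity of $\phi$ composed with the monotone rearrangements. The sign then follows from $\tilde G - G \ge 0$.

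The step I expect to be the main obstacle is making the integration-by-parts rigorous when $R$ is merely increasing (hence of bounded variation but possibly with jumps), rather than smooth. The monotonicity of $R$ and $\tilde R$ is essential here: it guarantees that the primitives $G,\tilde G$ are convex, which is what lets me pair the second-order increment of $\phi$ against the nonnegative gap $\tilde G - G$ with the correct sign. A careful treatment would use a Riemann--Stieltjes or Lebesgue--Stieltjes integration-by-parts against the monotone function $R$, or alternatively approximate $R,\tilde R$ by smooth increasing functions preserving the majorization relation and pass to the limit using boundedness (which justifies dominated convergence, since $\phi$ is bounded on the range of bounded rewards). I would also note in passing that strict inequality $\mathcal{V}(R)<\mathcal{V}(\tilde R)$ holds unless $R=\tilde R$ almost everywhere, because $\phi$ is strictly convex; but the theorem as stated only requires the weak inequality, so establishing the convex-function majorization inequality above is sufficient.
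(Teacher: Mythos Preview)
Your reduction is identical to the paper's: both arrive at the statement that $R\succ\tilde R$ implies $\int_0^1\phi(R)\ge\int_0^1\phi(\tilde R)$ for the convex function $\phi(u)=\exp(-u/(2c\sigma^2))$. The difference lies only in how that majorization inequality is established. The paper discretizes: it first treats piecewise constant rewards $R,\tilde R\in\mathcal{R}^r_n$, where Lorenz order becomes ordinary vector majorization and one can cite the finite Hardy--Littlewood--P\'olya/Schur-convexity result (Marshall et al., Proposition~4.B.1) directly; it then passes to general bounded $R,\tilde R$ by choosing discretizations that preserve the Lorenz order (via the mean value theorem) and taking limits. Your route attempts the continuous inequality head-on via an integration-by-parts identity, which is certainly a valid and classical alternative, though your displayed formula $\int_0^1(\tilde G-G)\,d\mu_\phi$ is not quite the right object (the measure coming from $\phi''$ lives on the reward axis, not on $[0,1]$; the cleanest version is the layer-cake representation $\phi(u)=\int(u-t)_+\,d\phi'(t)+\text{affine}$, which reduces the claim to $\int_0^1(R(z)-t)_+\,dz\ge\int_0^1(\tilde R(z)-t)_+\,dz$, itself an immediate consequence of the Lorenz hypothesis). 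Your fallback of approximating by smooth increasing functions and passing to the limit is essentially the paper's strategy with a different class of approximants. Both approaches work; the paper's is marginally more self-contained because the discrete HLP statement is textbook, while your direct route avoids discretization but needs a bit more care to state correctly.
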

\begin{proof}
First assume $R, \tilde R\in \mathcal{R}^r_n$, where $\mathcal{R}^r_n$ is the set of piecewise constant reward functions of the form
\[R(r)=\sum_{i=1}^n R_i 1_{\{(i-1)/n\le r< i/n\}}+R_n 1_{\{r=1\}}.\]
In this case, the Lorenz order translates to $\sum_{i=1}^n R_i=\sum_{i=1}^n \tilde R_i$ and $\sum_{i=1}^k R_i\le \sum_{i=1}^k \tilde R_i$ for all $k\in\{1,\ldots, n\}$; that is, $(R_1, \ldots, R_n)$ majorizes $(\tilde R_1, \ldots, \tilde R_n)$. 
By Marshall et al.\ \cite[Proposition 4.B.1]{MajorizationTheory}, $\tilde R\prec R$ if and only if $\sum_{i=1}^n g(\tilde R_i)\le \sum_{i=1}^n g(R_i)$ for all continuous convex functions $g$. Take $g(x)=\exp(-\frac{x}{2c\sigma^2})$, we obtain
\[\int_0^1 \exp\left(-\frac{\tilde R(r)}{2c\sigma^2}\right) dr=\frac{1}{n}\sum_{i=1}^n g(\tilde R_i)\le \frac{1}{n}\sum_{i=1}^n g(R_i)= \int_0^1 \exp\left(-\frac{R(r)}{2c\sigma^2}\right) dr,\]
which is equivalent to $\mathcal{V}(\tilde R)\ge \mathcal{V}(R)$.
This finishes the proof for piecewise constant reward functions.

For general $R, \tilde R \in\mathcal{R}_b^r$, we approximate $\mathcal{V}(R)$ and $\mathcal{V}(\tilde R)$ by the Riemann sums $\mathcal{V}(R^{(n)})$ and $\mathcal{V}(\tilde R^{(n)})$, respectively, where $R^{(n)}, \tilde R^{(n)}\in \mathcal{R}^r_n$. Moreover, by the mean value theorem, $R^{(n)}, \tilde R^{(n)}$ can always be chosen to satisfy $\sum_{i=1}^k R^{(n)}_i=\int_0^{k/n} R(r)dr$ and $\sum_{i=1}^k \tilde R^{(n)}_i=\int_0^{k/n} \tilde R(r)dr$ for all $k\in\{1, \ldots, n\}$. This ensures that the discretization preserves the Lorenz order. The result then follows from the previous step and passing to the limit.
\end{proof}

\begin{remark}
The maximum game value is attained by the most equal reward function, namely, the uniform reward. This can also be directly seen from Jenssen's inequality:
\[\int_0^1 \exp\left(-\frac{R(r)}{2c\sigma^2}\right)dr\ge \exp\left(-\frac{\int_0^1 R(r)dr}{2c\sigma^2}\right),\]
with equality attained if and only if $R$ is constant. From another perspective, the expected reward in equilibrium is always equal to $\int_0^1 R(r)dr$ by symmetry, while the expected cost of effort is minimized to zero under the uniform reward, when nobody exerts any effort. Since uniform reward induces zero effort,  the expected total effort clearly does not have the same monotonicity as the game value with respect to reward inequality (cf. Section~\ref{sec:opt-effort}).

\end{remark}

\section{Tournament design}\label{sec:tourn}

Denote the mapping from $R\in\mathcal{R}_b^r$ to the unique equilibrium $\mu$ by 
\[\mathcal{E}: \mathcal{R}_b^r\rightarrow {\mathcal{P}(\bbR)}.\] 
From \eqref{eq:q_mu}, we see that $\mathcal{E}$ is translation invariant, i.e. $\mathcal{E}(R+C)=\mathcal{E}(R)$ for any constant $C$. Let $\mathcal{P}^+(\bbR)$ be the set of probability measures on $\bbR$ that have strictly positive density. For $\mu\in \mathcal{P}^+(\bbR)$, define the normalized density
\[\zeta_\mu:=f_\mu/f_0.\]

\subsection{Realizing a target equilibrium distribution}

Suppose the principal has in mind a target distribution $\mu$ of the terminal project value in equilibrium. He wants to know whether that is feasible via a purely rank-based reward, and if yes, how should he design the reward to achieve it? The following theorem completely characterizes the set of feasible equilibria and the reward functions that induce them.

\begin{thm}\label{thm:reverse_eng} \
\begin{itemize}
\item[(i)] The set of equilibria attainable by a purely rank-based reward is given by
\[\mathcal{E}(\mathcal{R}_b^r)=\{\mu\in \mathcal{P}^+(\bbR): \zeta_\mu, 1/\zeta_\mu \text{ are bounded and $\zeta_\mu$ is increasing}\}.\]
\item[(ii)] If $\mu\in \mathcal{E}(\mathcal{R}_b^r)$, then 
\[\mathcal{E}^{-1}(\mu)=\{2c\sigma^2\ln \zeta_\mu(q_\mu(r))+C: C\in\bbR\}.\]
\item[(iii)] Suppose we impose additional reservation ``utility" constraint $\mathcal{V}(R)\ge V_0$ and budget constraint $\int_0^1 R(r)dr\le K$, then the constant $C$ in (ii) is restricted to
\[V_0\le C\le K-2c\sigma^2 H\left(\mu | \mathcal{N}(x_0, \sigma^2 T)\right),\]
where 
\[H\left(\mu | \mathcal{N}(x_0, \sigma^2 T)\right)=\int_{-\infty}^\infty \ln \zeta_\mu(y) d\mu(y)=\int_0^1 \ln \zeta_\mu(q_\mu(r))dr.\] 
In particular, such a $C$ exists if and only if
\begin{equation*}
H\left(\mu | \mathcal{N}(x_0, \sigma^2 T)\right)\le \frac{K-V_0}{2c\sigma^2}.
\end{equation*}
\end{itemize}
\end{thm}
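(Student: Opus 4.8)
The plan is to read all three parts directly off the fixed point equation \eqref{eq:fixed-point}, specialized to purely rank-based rewards. For $R\in\mathcal{R}_b^r$ we have $R_\mu(x)=R(F_\mu(x))$, so \eqref{eq:fixed-point} becomes
\[\zeta_\mu(x)=\frac{f_\mu(x)}{f_0(x)}=\frac{1}{\beta(\mu)}\exp\left(\frac{R(F_\mu(x))}{2c\sigma^2}\right).\]
This single identity drives everything: it ties the shape of $\zeta_\mu$ to the shape of $R$, and inverting it recovers $R$ from $\mu$.

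For part (i) I would prove both inclusions from this identity. If $\mu=\mathcal{E}(R)$, then boundedness of $R$ makes the right side bounded above and bounded below away from $0$, giving boundedness of $\zeta_\mu$ and $1/\zeta_\mu$; and since $R$ and $F_\mu$ are both increasing, $\zeta_\mu$ is increasing. Conversely, given $\mu\in\mathcal{P}^+(\bbR)$ with $\zeta_\mu,1/\zeta_\mu$ bounded and $\zeta_\mu$ increasing, I would set $R(r):=2c\sigma^2\ln\zeta_\mu(q_\mu(r))$ (modulo an additive constant). Because $\mu$ has a strictly positive density, $F_\mu:\bbR\to(0,1)$ is a strictly increasing bijection with inverse $q_\mu$, so $R$ is well defined on $(0,1)$; boundedness of $\zeta_\mu,1/\zeta_\mu$ makes $R$ bounded, and monotonicity of $\zeta_\mu$ composed with the increasing $q_\mu$ makes $R$ increasing, hence $R\in\mathcal{R}_b^r$. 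The crucial check is that this $R$ actually induces $\mu$: plugging it in gives
\[\beta(\mu)=\int_\bbR f_0(x)\zeta_\mu(x)\,dx=\int_\bbR f_\mu(x)\,dx=1,\]
so the normalization is self-consistent and $\mu$ satisfies its own fixed point equation; uniqueness of the rank-based equilibrium (Remark~\ref{rmk:effort}) then yields $\mathcal{E}(R)=\mu$.

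For part (ii), the inclusion $\supseteq$ is the construction above together with the translation invariance $\mathcal{E}(R+C)=\mathcal{E}(R)$ already recorded after the definition of $\mathcal{E}$. For $\subseteq$, any $R\in\mathcal{E}^{-1}(\mu)$ obeys the displayed identity, and solving it for $R$ on $(0,1)=\mathrm{range}(F_\mu)$ gives $R(r)=2c\sigma^2\ln\zeta_\mu(q_\mu(r))+2c\sigma^2\ln\beta(\mu)$, of the stated form with $C=2c\sigma^2\ln\beta(\mu)$. For part (iii), I would evaluate the two constraint functionals along the family $R(r)=2c\sigma^2\ln\zeta_\mu(q_\mu(r))+C$. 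Repeating the self-consistency computation with the constant reinstated gives $\beta(\mu)=e^{C/(2c\sigma^2)}$, so by the value formula of Theorem~\ref{thm:NE} the game value collapses to $\mathcal{V}(R)=2c\sigma^2\ln\beta(\mu)=C$, turning the reservation constraint into $C\ge V_0$. For the budget, the substitution $y=q_\mu(r)$ (so $dr=d\mu(y)$) gives
\[\int_0^1 R(r)\,dr=2c\sigma^2\int_\bbR\ln\zeta_\mu(y)\,d\mu(y)+C,\]
and recognizing $\int_\bbR\ln\zeta_\mu\,d\mu=\int_\bbR\ln\bigl(d\mu/d\mathcal{N}(x_0,\sigma^2T)\bigr)\,d\mu=H(\mu|\mathcal{N}(x_0,\sigma^2T))$ converts the budget bound into $C\le K-2c\sigma^2 H(\mu|\mathcal{N}(x_0,\sigma^2T))$. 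Intersecting the two bounds gives the stated interval, which is nonempty precisely when $H\le(K-V_0)/(2c\sigma^2)$.

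The boundedness/monotonicity bookkeeping and the entropy change of variables are routine. The step I would treat most carefully is the reverse direction of (i): I must confirm the reconstructed $R$ is genuinely admissible and reproduces $\mu$ rather than some other fixed point. The two structural hypotheses on $\zeta_\mu$ are exactly what is needed — boundedness for $R\in\mathcal{R}_b$, monotonicity for $R$ increasing in rank — and the self-consistency $\beta(\mu)=1$ (resp.\ $e^{C/(2c\sigma^2)}$) is what closes the fixed point. One minor point worth a sentence: $R$ is pinned down only on $(0,1)$, since ranks lie in $(0,1)$ almost surely under the positive-density measure $\mu$, so the endpoint values are immaterial to the equilibrium.
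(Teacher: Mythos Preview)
Your proposal is correct and follows essentially the same approach as the paper: both reconstruct $R$ as $2c\sigma^2\ln\zeta_\mu(q_\mu(r))+C$, verify it induces $\mu$ via the fixed point characterization, and then read off $\mathcal{V}(R)=C$ and $\int_0^1 R=C+2c\sigma^2 H(\mu|\mathcal{N}(x_0,\sigma^2T))$. The only cosmetic difference is that you check the fixed point directly in density form \eqref{eq:fixed-point} (observing $\beta(\mu)=1$), whereas the paper verifies it in the equivalent quantile form \eqref{eq:q_mu} and, for part~(ii), differentiates that formula rather than solving the density identity.
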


\begin{proof}
(i) From Theorem~\ref{thm:NE0}, 
we know that the normalized density $\zeta_\mu$ of any equilibrium $\mu$ is increasing and log-bounded. Conversely, given any $\mu\in\mathcal{P}^+(\bbR)$ with such properties, it is easy to check that $\mu$ satisfies \eqref{eq:q_mu} with purely rank-based reward function $R_0(r)=2c\sigma^2\ln \zeta_\mu(q_\mu(r))$:
\[\frac{\int_0^r \exp\left(-\frac{2c\sigma^2\ln \zeta_\mu(q_\mu(z))}{2c\sigma^2}\right)dz}{\int_0^1 \exp\left(-\frac{2c\sigma^2\ln \zeta_\mu(q_\mu(z))}{2c\sigma^2}\right)dz}
=\frac{\int_0^r \left(\zeta_\mu(q_\mu(z))\right)^{-1}dz}{\int_0^1 \left(\zeta_\mu(q_\mu(z))\right)^{-1}dz}=\frac{\int_{-\infty}^{q_\mu(r)}\frac{1}{\zeta_\mu(y)}d\mu(y)}{\int_{-\infty}^{\infty}\frac{1}{\zeta_\mu(y)}d\mu(y)}=N\left(\frac{q_\mu(r)-x_0}{\sigma\sqrt{T}}\right).\]

(ii) If $R(r)$ is another function in $\mathcal{R}_b^r$ that attains $\mu$ in equilibrium, then
\[N\left(\frac{y-x_0}{\sigma\sqrt{T}}\right)=\frac{\int_0^{F_\mu(y)} \exp\left(-\frac{R(z)}{2c\sigma^2}\right)dz}{\int_0^1 \exp\left(-\frac{R(z)}{2c\sigma^2}\right)dz}\]
by \eqref{eq:q_mu}. Differentiating both sides with respect to $y$ and setting $y=q_\mu(r)$, we obtain
\[\int_0^1 \exp\left(-\frac{R(z)}{2c\sigma^2}\right)dz= \zeta_\mu(q_\mu(r))\exp\left(-\frac{R(r)}{2c\sigma^2}\right)=\exp\left(\frac{R_0(r)-R(r)}{2c\sigma^2}\right).\]
Since the left hand side is independent of $r$, $R-R_0$ must be constant.

(iii) Let $R(r)=R_0(r)+C$ be a reward function realizing $\mu$ in equilibrium. By Theorem~\ref{thm:NE}, the game value $\mathcal{V}(R)=\mathcal{V}(R_0)+C=C$. Hence $\mathcal{V}(R)\ge V_0$ if and only if $C\ge V_0$. We also have
\[\int_0^1 R(r)dr=C+\int_0^1 2c\sigma^2\ln \zeta_\mu(q_\mu(r)) dr=C+2c\sigma^2 H\left(\mu | \mathcal{N}(x_0, \sigma^2 T)\right).\]
So $\int_0^1 R(r)dr\le K$ if and only if $C\le K-2c\sigma^2 H\left(\mu | \mathcal{N}(x_0, \sigma^2 T)\right)$.
\end{proof}

Theorem~\ref{thm:reverse_eng} allows us to convert many optimal reward design problems into problems about finding the optimal target equilibrium distribution. We gave three solvable examples below.

\subsection{Maximizing rank-$\alpha$ performance}

Fix a number $\alpha\in(0,1)$, a reservation utility $V_0$ and a budget $K\ge V_0$. We look for a reward function $R\in\mathcal{R}^r_b$ which meets both the reservation utility requirement and the budget constraint, and which maximizes the $\alpha$-quantile of $\mathcal{E}(R)$. 
Define the set of feasible reward functions by 
\[\mathcal{H}:=\left\{R\in\mathcal{R}_b^r: \int_0^1 R(r)dr \le K  \text{ and } \mathcal{V}(R)=V(R,\mathcal{E}(R))\ge V_0\right\}.\]
The optimization problem reads
\[Q(\alpha):=\sup_{R\in\mathcal{H}} q_{\mathcal{E}(R)}(\alpha).\]

\begin{thm}
The optimal quantile $Q(\alpha)$ is uniquely attained (up to a.e.\ equivalence) by the step function
\[R^*(r)=V_0+\begin{cases}
2c\sigma^2 \ln \frac{\alpha}{x_{\alpha}}, & 0\le r<\alpha,\\
2c\sigma^2 \ln \frac{1-\alpha}{1-x_{\alpha}}, & \alpha \le r\le 1,
\end{cases}\]
where $x_\alpha$ is the unique solution in $[\alpha,1)$ to the equation
\begin{equation*}\label{eq:x_alpha}
\alpha \ln \frac{\alpha}{x}+(1-\alpha)\ln \frac{1-\alpha}{1-x}=\frac{K-V_0}{2c\sigma^2}.
\end{equation*}
Let $\mu^*=\mathcal{E}(R^*)$ and $f_0$ be given by \eqref{eq:f0}. We have
\[Q(\alpha)=q_{\mu^*}(\alpha)=x_0+\sigma\sqrt{T} N^{-1}\left(x_\alpha\right),\] 
and
\[f_{\mu^*}(x)=\begin{cases}
\frac{\alpha}{x_\alpha}f_0(x), & x< Q(\alpha),\\
\frac{1-\alpha}{1-x_\alpha}f_0(x), & x > Q(\alpha).
\end{cases}\]
\end{thm}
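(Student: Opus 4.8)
The plan is to invoke Theorem~\ref{thm:reverse_eng} to recast this reward-design problem as a constrained maximization over the target equilibrium distribution, and then settle that maximization with a one-line Jensen argument. First I would reduce the optimization over $R$ to one over $\mu$. By Theorem~\ref{thm:reverse_eng}(ii)--(iii), every $R\in\mathcal{H}$ realizes some attainable equilibrium $\mu=\mathcal{E}(R)\in\mathcal{E}(\mathcal{R}_b^r)$, and the constraints $\mathcal{V}(R)\ge V_0$ and $\int_0^1 R(r)\,dr\le K$ together force $H(\mu\,|\,\mathcal{N}(x_0,\sigma^2 T))\le B:=\frac{K-V_0}{2c\sigma^2}$; conversely, any attainable $\mu$ meeting this entropy bound is realized by a feasible reward upon taking the additive constant $C=V_0$ in Theorem~\ref{thm:reverse_eng}(ii). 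Since $q_{\mathcal{E}(R)}(\alpha)$ depends only on $\mu$, the problem collapses to $\sup\{q_\mu(\alpha): \mu\in\mathcal{E}(\mathcal{R}_b^r),\ H(\mu\,|\,\mathcal{N}(x_0,\sigma^2 T))\le B\}$.

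Next I would parametrize the admissible $\mu$ exactly as in the proof of Theorem~\ref{thm:NE}. Writing $w(r):=\zeta_\mu(q_\mu(r))$ and $y(r):=N\big((q_\mu(r)-x_0)/(\sigma\sqrt{T})\big)$, one has $y'(r)=1/w(r)$, so that $q_\mu(\alpha)=x_0+\sigma\sqrt{T}\,N^{-1}(y(\alpha))$. By Theorem~\ref{thm:reverse_eng}(i) the map $w$ ranges over all increasing, log-bounded positive functions on $[0,1]$ subject to $\int_0^1 w^{-1}=y(1)=1$ and $\int_0^1 \ln w = H(\mu\,|\,\mathcal{N}(x_0,\sigma^2 T))\le B$. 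Because $N^{-1}$ is increasing, maximizing $q_\mu(\alpha)$ is the same as maximizing $J:=y(\alpha)=\int_0^\alpha w^{-1}(r)\,dr$.

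The heart of the argument is then a Jensen estimate. Since $w$ is increasing, $1/w$ is decreasing, so its average over $[0,\alpha)$ dominates its average over $[\alpha,1)$; this yields $\alpha\le J<1$. Applying Jensen's inequality to the concave logarithm on each subinterval gives $\int_0^\alpha \ln w\ge \alpha\ln\frac{\alpha}{J}$ and $\int_\alpha^1 \ln w\ge (1-\alpha)\ln\frac{1-\alpha}{1-J}$, whence
\[\Phi(J):=\alpha\ln\frac{\alpha}{J}+(1-\alpha)\ln\frac{1-\alpha}{1-J}\ \le\ \int_0^1\ln w\ \le\ B.\]
A short computation shows $\Phi$ is convex on $[\alpha,1)$ with $\Phi(\alpha)=\Phi'(\alpha)=0$, hence strictly increasing from $0$ to $+\infty$; thus the equation $\Phi(x)=B$ defining $x_\alpha$ has a unique root $x_\alpha\in[\alpha,1)$, and $\Phi(J)\le B=\Phi(x_\alpha)$ forces $J\le x_\alpha$.

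Finally I would verify attainment and uniqueness. Equality $J=x_\alpha$ requires equality in both Jensen steps, i.e.\ $1/w$ constant on each of $[0,\alpha)$ and $[\alpha,1)$; the normalization $\int_0^1 w^{-1}=1$ then pins $w$ to the two-step profile with values $\alpha/x_\alpha$ and $(1-\alpha)/(1-x_\alpha)$, and $x_\alpha\ge\alpha$ guarantees $\alpha/x_\alpha\le(1-\alpha)/(1-x_\alpha)$, so $w$ is genuinely increasing and the induced $\mu^*$ is attainable. Since equality also forces $H(\mu^*\,|\,\mathcal{N}(x_0,\sigma^2 T))=B$, the budget bound in Theorem~\ref{thm:reverse_eng}(iii) collapses to $C\le V_0$, which together with $C\ge V_0$ pins the constant to $C=V_0$; translating through $R_0(r)=2c\sigma^2\ln w(r)$ and $R^*=V_0+R_0$ produces the claimed step function (unique up to a.e.\ equivalence), while $Q(\alpha)=x_0+\sigma\sqrt{T}\,N^{-1}(x_\alpha)$ and $f_{\mu^*}=w\,f_0$ give the stated quantile and density. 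The only genuinely substantive step is the Jensen bound: everything hinges on the reparametrization $w=\zeta_\mu\circ q_\mu$, which simultaneously linearizes the constraints and lets monotonicity of $w$ deliver both $J\ge\alpha$ and the two-step equality case; the convexity of $\Phi$ and the back-translation are routine.
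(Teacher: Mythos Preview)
Your proposal is correct and follows essentially the same route as the paper's proof: reduce via Theorem~\ref{thm:reverse_eng} to an entropy-constrained optimization over the equilibrium distribution, reparametrize by $w=\zeta_\mu\circ q_\mu$ (the paper uses $h=1/w$), and close with the same two-interval Jensen argument to obtain $\Phi(J)\le B$. The paper states the bijection between feasible $\mu$ and feasible $h$ a bit more explicitly, but your plan identifies all the substantive points and the uniqueness reasoning matches.
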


\begin{proof}
By Theorem~\ref{thm:reverse_eng}, $\mu\in\mathcal{E}(\mathcal{H})$ if and only if $\mu\in\mathcal{E}(\mathcal{R}_b^r)$ and 
\begin{equation}\label{eq:constraint}
H\left(\mu | \mathcal{N}(x_0, \sigma^2 T)\right)=\int_0^1 \ln \zeta_\mu(q_\mu(r))dr\le \frac{K-V_0}{2c\sigma^2}.
\end{equation}
So we can equivalently formulate the optimization problem as one having $\mu$ as the decision variable, $q_{\mu}(\alpha)$ as the objective function,
and $\mu\in\mathcal{E}(\mathcal{R}_b^r)$ and \eqref{eq:constraint} as the constraints. 

Maximizing $q_\mu(\alpha)$ is equivalent to maximizing 
\begin{align*}
N\left(\frac{q_\mu(\alpha)-x_0}{\sigma\sqrt{T}}\right)&=\int_{-\infty}^{q_\mu(\alpha)}\frac{1}{\sigma\sqrt{T}}\varphi\left(\frac{y-x_0}{\sigma\sqrt{T}}\right)dy\\
&=\int_{0}^{\alpha}\frac{1}{\sigma\sqrt{T}}\varphi\left(\frac{q_\mu(r)-x_0}{\sigma\sqrt{T}}\right)\frac{dr}{f_\mu(q_\mu(r))}=\int_0^\alpha \frac{dr}{\zeta_\mu(q_\mu(r))}.
\end{align*}
For any feasible equilibrium distribution $\mu$, let $h:=1/(\zeta_\mu \circ q_\mu)$ which implies $\zeta_\mu=1/(h\circ F_\mu)$, $\int_0^1 h(r)dr=1$, and $\mu=\mathcal{E}(-2c\sigma^2 \ln h)$. In particular, the mapping from $\mu$ to $h$ is one-to-one. 
Further rewrite the optimization problem as
\begin{equation}\label{prob-h}
\text{maximize} \quad \mathcal{J}(h)=\int_0^\alpha h(r)dr \quad \text{s.t.}\quad \int_0^1 -\ln h(r)dr \le \frac{K-V_0}{2c\sigma^2} \ \text{ and } \int_0^1 h(r)dr=1,
\end{equation}
where $h$ is also constrained to be positive, decreasing, bounded and bounded away from zero, as translated from $\mu\in\mathcal{E}(\mathcal{R}_b^r)$.
Each feasible $\mu$ clearly induces a feasible $h$. Conversely, for any feasible $h$, define $\mu=\mathcal{E}(-2c\sigma^2 \ln h)$. Then $-2c\sigma^2 \ln h(r)=2c\sigma^2 \ln \zeta_\mu (q_\mu(r))+C$ for some constant $C$ by Theorem~\ref{thm:reverse_eng}. Together with the constraints in \eqref{prob-h}, we find that $h=1/(\zeta_{\mu} \circ q_{\mu})$ and that $\mu$ is feasible. Thus, the mapping from feasible $\mu$ to feasible $h$ is in fact bijective, which implies that it suffices for us to solve problem \eqref{prob-h}. Any optimal $h$ induces an optimal $\mu=\mathcal{E}(-2c\sigma^2 \ln h)$ which can be realized by the reward function
\[R=V_0-2c\sigma^2 \ln h.\]
Here we have added the constant $V_0$ to $-2c\sigma^2 \ln h$ to ensure that $R\in\mathcal{H}$. 
The rest of the proof is devoted to solving the equivalent problem \eqref{prob-h}. 

We first show that the constant $x_\alpha$ given in the theorem statement is well-defined. Let
\[g(x):=\alpha \ln \frac{\alpha}{x}+(1-\alpha)\ln \frac{1-\alpha}{1-x}, \quad 0<x<1.\]
It can be shown that $g(x)$ is strictly decreasing on $(0,\alpha)$ and strictly increasing on $(\alpha,1)$, hence has a global minimum at $x=\alpha$ with $g(\alpha)=0$. Moreover, $g(x)\rightarrow \infty$ as $x\rightarrow 0$ or $1$. Since $K\ge V_0$, by intermediate value theorem, the equation
\[g(x)=\frac{K-V_0}{2c\sigma^2}\]
has at least one solution.
When $K=V_0$, $x=\alpha$ is the unique solution. When $K>V_0$, there are two solutions: one in $(0,\alpha)$ and the other in $(\alpha,1)$. In both cases. $x_\alpha\in [\alpha,1)$ is well-defined.

Next, we show that
\[h^*(r):=\begin{cases}
\frac{x_\alpha}{\alpha}, & 0\le r< \alpha,\\
\frac{1-x_\alpha}{1-\alpha}, & \alpha\le r\le 1.
\end{cases}\]
is the unique optimizer of problem \eqref{prob-h}.
Since $0<\alpha\le x_\alpha<1$, it is clear that $h^*$ is decreasing, bounded and bounded away from zero. Straightforward calculation also shows that
\[\int_0^1 -\ln h^*(r)dr
=g(x_\alpha)=\frac{K-V_0}{2c\sigma^2} \quad \text{and} \quad \int_0^1 h^*(r)dr=1.\]
Therefore, $h^*$ satisfies all the feasibility constraints. Given any other feasible $h$. We have, by repeated application of Jensen's inequality, that
\begin{align*}
-\ln \left(\frac{1}{\alpha}\int_0^\alpha h(r)dr\right)&\le \frac{1}{\alpha} \int_0^\alpha -\ln h(r)dr = \frac{1}{\alpha}\left(\int_0^1 -\ln h(r)dr-\int_\alpha^1 -\ln h(r)dr\right)\\
&\le  \frac{1}{\alpha}\left(\frac{K-V_0}{2c\sigma^2}-\int_\alpha^1 -\ln h(r)dr\right)=\frac{K-V_0}{2\alpha c\sigma^2}+\frac{1-\alpha}{\alpha}\frac{1}{1-\alpha}\int_\alpha^1 \ln h(r)dr\\
&\le \frac{K-V_0}{2\alpha c\sigma^2}+\frac{1-\alpha}{\alpha}\ln \left(\frac{1}{1-\alpha}\int_\alpha^1 h(r)dr\right)\\
&= \frac{K-V_0}{2\alpha c\sigma^2}+\frac{1-\alpha}{\alpha}\ln \left(\frac{1}{1-\alpha}\left(1-\int_0^\alpha h(r)dr\right)\right).
\end{align*}
That is
\[g(\mathcal{J}(h))\le  \frac{K-V_0}{2 c\sigma^2}=g(x_\alpha).\]
We claim that $\mathcal{J}(h)\le x_\alpha=\mathcal{J}(h^*)$. Suppose on the contrary that $\mathcal{J}(h)> x_\alpha$. Then since $x_\alpha\ge \alpha$ and $g$ is strictly increasing on $(\alpha,1)$, we must have $g(\mathcal{J}(h))>g(x_\alpha)$, which is a contradiction. Thus, we have proved that $h^*$ is optimal. In fact, $h^*$ is the unique optimizer, since $\mathcal{J}(h)=\mathcal{J}(h^*)$ would imply all Jensen's inequalities above are equalities. This holds if and only if $h$ is constant on $[0,\alpha)$ and $(\alpha,1]$. We then use $\mathcal{J}(h)=\mathcal{J}(h^*)=x_\alpha$ and $\int_0^1 h(r)dr=1$ to deduce that $h=h^*$.

Finally, we argue that the optimal reward function $R^*=V_0-2c\sigma^2 \ln h^*$ induced by $h^*$ is also unique. Because of the bijection between $\mu$ and $h$, we know that $\mu^*=\mathcal{E}(-2c\sigma^2 \ln h^*)$ is the unique optimal equilibrium distribution. Note that $H\left(\mu^* | \mathcal{N}(x_0, \sigma^2 T)\right)=\int_0^1 -\ln h^*(r)dr=\frac{K-V_0}{2c\sigma^2}$. By Theorem~\ref{thm:reverse_eng}, 
\[\mathcal{E}^{-1}(\mu^*)\cap \mathcal{H}=\{C-2c\sigma^2 \ln h^*: V_0\le C\le K-2c\sigma^2 H\left(\mu^* | \mathcal{N}(x_0, \sigma^2 T)\right)\}=\{R^*\}.\]
The remaining theorem statements follow from direct calculation.
\end{proof}

\begin{remark}
One can also replace the reservation utility constraint by the hard constraint: $R\ge R_0$. Similar to Bayraktar et al.\ \cite[Theorem 6.2]{BCZ18}, the optimal reward function in this case is the equal reward with cutoff rank $\alpha$, i.e.\ $R(r)=R_0+\frac{K-R_0}{1-\alpha}1_{[\alpha,1]}(r)$.
\end{remark}

\subsection{Maximizing net profit}

Suppose each terminal output $y$ generates a profit $g(y)$ for the principal, where $g$ is a bounded increasing function. The goal is to find $R\in\mathcal{R}_b^r$ such that $\mathcal{V}(R)\ge V_0$ and the net profit
\[\int_{-\infty}^\infty g(y)d\mathcal{E}(R)(y)-\int_0^1 R(r)dr\] 
is maximized.

\begin{thm} The optimal net profit is given by
\[\frac{\int_{-\infty}^\infty \exp\left(\frac{g(y)}{2c\sigma^2}\right) f_0(y)g(y) dy}{\int_{-\infty}^\infty \exp\left(\frac{g(y)}{2c\sigma^2}\right) f_0(y)dy},\]
and is uniquely attained by 
\[R^*(r)=V_0+g(q_{\mu^*}(r))-2c\sigma^2 \ln \left(\int_{-\infty}^\infty \exp\left(\frac{g(z)}{2c\sigma^2}\right) f_0(z)dz\right),\]
where $f_0$ is given by \eqref{eq:f0}, and
\[f_{\mu^*}(y)=\frac{\exp\left(\frac{g(y)}{2c\sigma^2}\right) f_0(y)}{\int_{-\infty}^\infty \exp\left(\frac{g(z)}{2c\sigma^2}\right) f_0(z)dz}.\]
\end{thm}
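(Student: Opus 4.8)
The plan is to mirror the proof of the rank-$\alpha$ theorem: convert the optimization over reward functions $R$ into an optimization over the induced equilibrium distribution $\mu=\mathcal{E}(R)$, using the inverse map of Theorem~\ref{thm:reverse_eng}. By part (i), the feasible targets are exactly those $\mu\in\mathcal{P}^+(\bbR)$ for which $\zeta_\mu$ is increasing with $\zeta_\mu,1/\zeta_\mu$ bounded; by parts (ii)--(iii), every reward realizing such a $\mu$ has the form $R(r)=2c\sigma^2\ln\zeta_\mu(q_\mu(r))+C$, and its game value equals $C$. Since $\mathcal{E}$ is translation invariant, the output profit $\int g\,d\mu$ depends only on $\mu$, while the payment $\int_0^1 R\,dr=C+2c\sigma^2 H(\mu\,|\,\mathcal{N}(x_0,\sigma^2 T))$ depends on $C$ as well.

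Next I would eliminate $C$. The net profit is
\[
\int_{-\infty}^\infty g\,d\mu-\int_0^1 R\,dr=\int_{-\infty}^\infty g\,d\mu-2c\sigma^2 H\!\left(\mu\,\big|\,\mathcal{N}(x_0,\sigma^2 T)\right)-C,
\]
which is strictly decreasing in $C$, so the reservation constraint $\mathcal{V}(R)=C\ge V_0$ forces the optimal choice $C=V_0$. This reduces the problem to
\[
\sup_{\mu}\ \int_{-\infty}^\infty g\,d\mu-2c\sigma^2 H\!\left(\mu\,\big|\,\mathcal{N}(x_0,\sigma^2 T)\right)
\]
over feasible $\mu$, which is precisely the representative player's best-response functional \eqref{eq:agent-problem2} with the reward $R_{\tilde\mu}$ replaced by the fixed profit function $g$. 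Applying Proposition~\ref{prop:best-response} with $g$ in place of $R_{\tilde\mu}$ identifies the unconstrained maximizer $\mu^*$ with density $f_{\mu^*}\propto \exp(g/(2c\sigma^2))f_0$, matching the stated formula, and gives the optimal value of the penalized functional as $2c\sigma^2\ln\!\int \exp(g/(2c\sigma^2))f_0\,dy$.

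The step I expect to matter most is checking that this unconstrained maximizer is in fact feasible, i.e.\ $\mu^*\in\mathcal{E}(\mathcal{R}_b^r)$. Here $\zeta_{\mu^*}=\exp(g/(2c\sigma^2))/\beta$, and because $g$ is bounded and increasing, $\zeta_{\mu^*}$ is increasing with $\zeta_{\mu^*},1/\zeta_{\mu^*}$ bounded and $\beta<\infty$; thus the hypotheses of Theorem~\ref{thm:reverse_eng}(i) are met and no projection onto the feasible set is needed. Monotonicity of $g$ is exactly what makes the otherwise unconstrained optimizer admissible as an equilibrium, so this is where that hypothesis is essential. Finally, I would recover the optimal reward by substituting $\mu^*$ and $C=V_0$ into Theorem~\ref{thm:reverse_eng}(ii), which yields $R^*$ as stated, and read off the optimal net profit by substituting $R^*$ and $f_{\mu^*}$ into the objective. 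Uniqueness follows from strict convexity of the relative entropy (hence strict concavity of the penalized functional), which makes $\mu^*$ the unique maximizer, together with the fact that Theorem~\ref{thm:reverse_eng}(ii)--(iii) pins down the realizing reward uniquely once $C=V_0$ is fixed.
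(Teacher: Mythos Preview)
Your proposal is correct and follows essentially the same route as the paper: reduce to an optimization over $\mu$ via Theorem~\ref{thm:reverse_eng}, fix $C=V_0$ by monotonicity in $C$, solve the unconstrained entropy-penalized problem, and then verify that the unconstrained optimizer is feasible because $g$ is bounded and increasing. The one pleasant shortcut you take is to recognize the resulting static problem as exactly the best-response functional of Proposition~\ref{prop:best-response} with $g$ in place of $R_{\tilde\mu}$, whereas the paper redoes the Lagrangian computation from scratch; this saves a few lines but is not a different argument.
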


\begin{proof}
By Theorem~\ref{thm:reverse_eng}, it suffices for us to look for the optimal $\mu\in\mathcal{E}(\mathcal{R}_b^r)$ which can then be realized by $R(r)=2c\sigma^2\ln \zeta_\mu(q_\mu(r))+C$ for any $C\ge V_0$. It is clear that the principal should pick $C=V_0$ to minimize the cost. Write $R^\mu(r)=2c\sigma^2\ln \zeta_\mu(q_\mu(r))+V_0$. We then have
\[\int_0^1 R^\mu(r)dr=V_0+2c\sigma^2 \int_{-\infty}^{\infty} \ln \left(\frac{f_\mu(y)}{f_0(y)}\right) f_\mu(y) dy.\]
The optimization problem over $\mu$ is given by
\begin{equation}\label{prob-profit}
\begin{aligned}
\text{maximize} \quad &\int_{-\infty}^\infty g(y) f_\mu(y)dy-V_0-2c\sigma^2 \int_{-\infty}^{\infty} \ln \left(\frac{f_\mu(y)}{f_0(y)}\right) f_\mu(y) dy\\
 \text{s.t.} \quad & \int_{-\infty}^\infty f_\mu(y)dy= 1, \quad \ln \left(\frac{f_\mu}{f_0}\right) \text{ is bounded and increasing}.
\end{aligned}
\end{equation}
To solve problem \eqref{prob-profit}, we define 
\begin{align*}
L(f_\mu,\lambda)&=\int_{-\infty}^\infty g(y) f_\mu(y)dy-2c\sigma^2 \int_{-\infty}^{\infty} \ln \left(\frac{f_\mu(y)}{f_0(y)}\right) f_\mu(y) dy+\lambda\left(1-\int_{-\infty}^\infty f_\mu(y)dy\right).
\end{align*}
For each fixed $\lambda\in \bbR$, the integrand above attains its pointwise maximum at
\[f_\mu(y)=\exp\left(\frac{g(y)-\lambda}{2c\sigma^2}-1\right)f_0(y).\]
Clearly, since $g$ is bounded and increasing, so is $\ln (f_\mu/f_0)$.
We then find $\lambda$ by
\begin{equation*}
1=\int_{-\infty}^\infty f_\mu(y)dy=
\int_{-\infty}^\infty \exp\left(\frac{g(y)-\lambda}{2c\sigma^2}-1\right)f_0(y) dy,
\end{equation*}
giving
\[\exp\left(1+\frac{\lambda}{2c\sigma^2}\right)=\int_{-\infty}^\infty \exp\left(\frac{g(y)}{2c\sigma^2}\right) f_0(y)dy.\]
The formulas for $f_{\mu^*}$ and $R^*=R^{\mu^*}$ then follow.
\end{proof}

\subsection{Maximizing total effort}\label{sec:opt-effort}

Let $K\ge V_0$ be given. We look for a purely rank-based reward function $R$  which maximizes the total effort
\[A(R)=m_{\mathcal{E}(R)}-x_0=\int_{-\infty}^\infty y d{\mathcal{E}(R)}(y)-x_0,\]
subject to the reservation utility constraint $\mathcal{V}(R)\ge V_0$ and the budget constraint $\int_0^1 R(r)dr\le K$.
\begin{thm}\label{thm:opt-effort}
$\sup_{R\in\mathcal{R}_b^r}A(R)=\sqrt{(K-V_0)T/c}$. When $K=V_0$, the unique optimal reward is given by $R^*(r)\equiv V_0$. When $K>V_0$, as $M\rightarrow \infty$, an $O(e^{-M/\lambda})$-optimal reward is
\[R_M(r)=2c\sigma^2\ln \zeta_{\mu_M}(q_{\mu_M}(r))+V_0,\]
where
\[f_{\mu_M}(y)=\frac{f_0(y)\exp\left(\frac{y\wedge M\vee(-M)}{\lambda}\right)}{\int_{-\infty}^{\infty} f_0(z)\exp\left(\frac{z\wedge M\vee(-M)}{\lambda}\right)dz}, \quad \lambda=\sigma^2 \sqrt{\frac{cT}{K-V_0}}.\]
\end{thm}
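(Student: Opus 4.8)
The plan is to use Theorem~\ref{thm:reverse_eng} to trade the reward $R$ for its induced equilibrium $\mu=\mathcal{E}(R)$, turning the design problem into a pure optimization over measures. Write $\mu_0:=\mathcal{N}(x_0,\sigma^2T)$ (the law with density $f_0$ from \eqref{eq:f0}) and $D:=(K-V_0)/(2c\sigma^2)$. By Theorem~\ref{thm:reverse_eng}(iii) a feasible $R$ realizing $\mu$ exists iff $\mu\in\mathcal{E}(\mathcal{R}_b^r)$ and $H(\mu|\mu_0)\le D$; choosing the free constant $C=V_0$ saturates the reservation utility and relaxes the budget as much as possible. Since $A(R)=m_\mu-x_0$ depends only on $\mu$, the problem becomes maximizing $m_\mu-x_0$ over $\mu\in\mathcal{E}(\mathcal{R}_b^r)$ with $H(\mu|\mu_0)\le D$. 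First I would drop the regularity constraint and bound the relaxed problem by the Gibbs variational inequality: for every $\theta>0$ and $\mu\ll\mu_0$, $\theta(m_\mu-x_0)\le H(\mu|\mu_0)+\ln E_{\mu_0}[e^{\theta(Y-x_0)}]=H(\mu|\mu_0)+\tfrac12\theta^2\sigma^2T$, since $Y-x_0\sim\mathcal{N}(0,\sigma^2T)$ under $\mu_0$. Under $H(\mu|\mu_0)\le D$ this gives $m_\mu-x_0\le D/\theta+\tfrac12\theta\sigma^2T$, and minimizing over $\theta$ at $\theta=1/\lambda$ with $\lambda=\sigma^2\sqrt{cT/(K-V_0)}$ yields $m_\mu-x_0\le\sqrt{2D\sigma^2T}=\sqrt{(K-V_0)T/c}$, proving the ``$\le$'' half of the value identity, with or without the regularity constraint.

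Next I would identify the equality case of the relaxed problem: the Gibbs bound is saturated by the exponential tilt $d\mu^*/d\mu_0\propto e^{y/\lambda}$, which is itself the Gaussian $\mathcal{N}(x_0+\sigma^2T/\lambda,\sigma^2T)$. A direct computation gives $H(\mu^*|\mu_0)=\tfrac12\sigma^2T/\lambda^2=D$ and $m_{\mu^*}-x_0=\sigma^2T/\lambda=\sqrt{(K-V_0)T/c}$, so $\mu^*$ attains the upper bound and exactly exhausts the entropy budget. The catch is that $\zeta_{\mu^*}\propto e^{y/\lambda}$ is increasing but neither bounded nor bounded away from $0$, so $\mu^*\notin\mathcal{E}(\mathcal{R}_b^r)$: the ideal tilt corresponds to an unbounded (linear) reward, which is why the supremum over $\mathcal{R}_b^r$ is approached but not attained when $K>V_0$.

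To realize the value I would truncate the potential, replacing $y/\lambda$ by $\phi_M(y)/\lambda$ with $\phi_M(y)=(y\wedge M)\vee(-M)$, which produces exactly the stated $\mu_M$. Now $\zeta_{\mu_M}\propto e^{\phi_M/\lambda}$ is increasing, bounded, and bounded away from $0$, so $\mu_M\in\mathcal{E}(\mathcal{R}_b^r)$ and is realized by $R_M=2c\sigma^2\ln\zeta_{\mu_M}(q_{\mu_M}(\cdot))+V_0$, which by Theorem~\ref{thm:NE} has $\mathcal{V}(R_M)=V_0$. Two estimates remain. For feasibility I must show $H(\mu_M|\mu_0)\le D$; the clean route is to prove $M\mapsto H(\mu_M|\mu_0)$ is nondecreasing with limit $D$. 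With $\psi_M:=\phi_M/\lambda$ and $Z(M):=E_{\mu_0}[e^{\psi_M}]$, using $\frac{d}{dM}\ln Z(M)=E_{\mu_M}[\partial_M\psi_M]$ collapses the derivative of the entropy to $\mathrm{Cov}_{\mu_M}(\psi_M,\partial_M\psi_M)$; since both $\psi_M$ and $\partial_M\psi_M=\lambda^{-1}\mathrm{sign}(Y)\mathbf{1}_{\{|Y|>M\}}$ are nondecreasing functions of $Y$, this covariance is nonnegative, so $H(\mu_M|\mu_0)\uparrow D$ and stays below $D$, giving the budget constraint. For near-optimality I would bound $m_{\mu^*}-m_{\mu_M}$ by tail integrals $\int_{|y|>M}|y|f_0(y)e^{\phi_M(y)/\lambda}\,dy$ together with the normalization gap $Z_\infty-Z(M)$ (where $Z_\infty=E_{\mu_0}[e^{Y/\lambda}]$); since $f_0e^{y/\lambda}$ is a Gaussian density these are $O(e^{-M/\lambda})$ (in fact super-exponentially small), establishing that $R_M$ is $O(e^{-M/\lambda})$-optimal. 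Finally, the degenerate case $K=V_0$ forces $D=0$, so $H(\mu|\mu_0)\le 0$ leaves only $\mu=\mu_0$, whence $A=m_{\mu_0}-x_0=0=\sqrt{0}$ and $R^*\equiv V_0$ is the unique feasible reward.

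The main obstacle is the feasibility step $H(\mu_M|\mu_0)\le D$: the upper bound and the identification of the tilted Gaussian are routine, but one must ensure that truncating the potential does not overshoot the entropy budget. The monotonicity argument above—reducing $\frac{d}{dM}H(\mu_M|\mu_0)$ to the covariance of two comonotone functions of $Y$—is the crux, and it simultaneously explains why larger $M$ both respects the budget and improves the mean; the accompanying $O(e^{-M/\lambda})$ rate is then a comparatively soft Gaussian tail estimate.
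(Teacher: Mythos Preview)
Your proposal is correct and shares the same architecture as the paper---reduce via Theorem~\ref{thm:reverse_eng} to an entropy-constrained mean-maximization, identify the exponentially tilted Gaussian as the relaxed optimizer, then truncate---but two steps are handled differently. First, for the upper bound the paper writes the Kuhn--Tucker conditions for the infinite-dimensional Lagrangian and solves for the multipliers, whereas you invoke the Donsker--Varadhan (Gibbs) inequality $\theta(m_\mu-x_0)\le H(\mu|\mu_0)+\tfrac12\theta^2\sigma^2T$ and minimize over $\theta$; your route is more elementary and sidesteps any sufficiency check for the first-order conditions. Second, you explicitly verify that the truncated reward $R_M$ meets the budget constraint by proving $M\mapsto H(\mu_M|\mu_0)$ is nondecreasing, reducing $\frac{d}{dM}H(\mu_M|\mu_0)$ to the covariance of two nondecreasing functions of $Y$; the paper asserts $R_M\in\mathcal{R}_b^r$ and computes $A(R_M)$ but never checks $\int_0^1 R_M(r)\,dr\le K$, so on this point your argument is actually more complete. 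Both proofs conclude with the same Gaussian tail estimate for the $O(e^{-M/\lambda})$ rate.
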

\begin{proof}
By Theorem~\ref{thm:reverse_eng}, it suffices for us to look for an optimal target distribution $\mu$ satisfying 
\[H\left(\mu | \mathcal{N}(x_0, \sigma^2 T)\right)\le \frac{K-V_0}{2c\sigma^2}.\]
Such a $\mu$, if lies in $\mathcal{E}(\mathcal{R}_b^r)$, can be realized by the reward function $R(r)=2c\sigma^2\ln \zeta_\mu(q_\mu(r))+V_0$. We shall assume that we are in the nontrivial case $K>V_0$, otherwise the only attainable equilibrium is $\mu= \mathcal{N}(x_0, \sigma^2 T)$ which is induced by the uniform reward. We first relax the boundedness requirement of $\ln (f_\mu/f_0)$; it turns out that the the relaxed optimizer fails to be in $\mathcal{E}(\mathcal{R}_b^r)$. We then construct an approximate optimizer by truncation.

The relaxed optimization problem over $\mu$ reads
\begin{equation}\label{prob-effort}
\begin{aligned}
\text{maximize} \quad & \int_{-\infty}^\infty y f_\mu(y)dy-x_0\\
 \text{s.t.} \quad & \int_{-\infty}^\infty f_\mu(y)dy= 1, \quad \ln \left(\frac{f_\mu}{f_0}\right) \text{ is increasing},\\
&  \int_{-\infty}^{\infty} \ln \left(\frac{f_\mu(y)}{f_0(y)}\right) f_\mu(y) dy \le \frac{K-V_0}{2c\sigma^2}.
\end{aligned}
\end{equation}
Any candidate optimizer $f_{\mu^*}$ necessarily satisfies the Kuhn--Tucker conditions (see e.g.\ Luenberger \cite{Luenberger69})
\begin{align*}
&y=\lambda_1+\lambda_2\left(1+\ln \left(\frac{f_{\mu^*}(y)}{f_0(y)}\right)\right)\\
& \int_{-\infty}^\infty f_{\mu^*}(y)dy= 1, \quad  \int_{-\infty}^{\infty} \ln \left(\frac{f_{\mu^*}(y)}{f_0(y)}\right) f_{\mu^*}(y) dy \le \frac{K-V_0}{2c\sigma^2}, \quad \lambda_2\ge 0,\\
& \lambda_2\left( \int_{-\infty}^{\infty} \ln \left(\frac{f_{\mu^*}(y)}{f_0(y)}\right) f_{\mu^*}(y) dy-\frac{K-V_0}{2c\sigma^2}\right)=0.
\end{align*}
The above implies
\[f_{\mu^*}(y)
=\frac{1}{\sigma \sqrt{T}}\varphi\left(\frac{y-(x_0+\sigma^2 T/\lambda_2)}{\sigma\sqrt{T}}\right)\]
and
\[ \ln \left(\frac{f_{\mu^*}(y)}{f_0(y)}\right)=\frac{y-x_0}{\lambda_2}-\frac{\sigma^2 T}{2\lambda_2^2},\]
where $\lambda_2>0$ is determined by the complementary slackness
\[H\left(\mu^* | \mathcal{N}(x_0, \sigma^2 T)\right)=\frac{\sigma^2 T}{2\lambda_2^2}=\frac{K-V_0}{2c\sigma^2},\]
giving $\lambda_2=\sigma^2 \sqrt{cT/(K-V_0)}$.
We then have
\[f_{\mu^*}(y)=\frac{1}{\sigma \sqrt{T}}\varphi\left(\frac{y-(x_0+\sqrt{(K-V_0)T/c})}{\sigma\sqrt{T}}\right).\]
In other words, $\mu^*= \mathcal{N}(x_0+\sqrt{(K-V_0)T/c}, \sigma^2 T)$. It is also clear that $\ln f_{\mu^*}(y)/f_0(y)$ is increasing.
Since the objective and the equality constraints are linear in $f_\mu$, and the inequality constraint is convex in $f_\mu$, it can also be shown that these conditions, together with the monotonicity of $\ln f_{\mu^*}/f_0$, are sufficient for optimality. The relaxed optimal value equals $\sqrt{(K-V_0)T/c}\ge \sup_{R\in\mathcal{R}_b^r}A(R)$.

Since $\ln f_{\mu^*}/f_0$ is unbounded, such a $\mu^*\notin \mathcal{R}^r_b$. Consider the truncated $\mu_M$ defined in the theorem statement. We have $R_M\in\mathcal{R}_b^r$ and $\mu_M=\mathcal{E}(R_M)$ for all $M$. Moreover, let 
\[h_M(y):=f_0(y)\exp\left(\frac{y\wedge M\vee(-M)}{\lambda_2}-\frac{x_0}{\lambda_2}-\frac{\sigma^2 T}{2\lambda_2^2}\right).\]
We can show that
\[\int_{-\infty}^{\infty} h_M(y)dy=\int_{-\infty}^{\infty} f_{\mu^*}(y)dy+O(e^{-M/\lambda_2})=1+O(e^{-M/\lambda_2})\]
and
\[\int_{-\infty}^{\infty} yh_M(y)dy=\int_{-\infty}^{\infty}y f_{\mu^*}(y)dy+O(e^{-M/\lambda_2})=x_0+\sqrt{(K-V_0)T/c}+O(e^{-M/\lambda_2})\]
as $M\rightarrow \infty$.
It follows that
\begin{align*}
A(R_M)&= \frac{\int_{-\infty}^\infty y h_M(y) dy}{\int_{-\infty}^{\infty} h_M(z)dz}-x_0\\
&=\frac{x_0+\sqrt{(K-V_0)T/c}+O(e^{-M/\lambda_2})}{1+O(e^{-M/\lambda_2})}-x_0=\sqrt{(K-V_0)T/c}+O(e^{-M/\lambda_2}).
\end{align*} 
\end{proof}

\begin{remark}\label{rmk:opt-effort}
It can be verified using Theorem~\ref{thm:NE0} that $\mu^*=\mathcal{N}(x_0+\sqrt{(K-V_0)T/c}, \sigma^2 T)$ is an equilibrium induced by the unbounded reward
\begin{align*}
R^*(r)&=2c\sigma^2\ln \zeta_{\mu^*}(q_{\mu^*}(r))+V_0=2\sqrt{\frac{(K-V_0)c}{T}}(q_{\mu^*}(r)-x_0)-K+2V_0\\
&=K+2\sigma \sqrt{c(K-V_0)} N^{-1}(r).
\end{align*}
The optimal effort process associated with $\mu^*$ is constant: $a^*_s\equiv \sqrt{\frac{K-V_0}{cT}}$, by straightforward calculation using \eqref{eq:astar}. This can also be seen by directly substituting 
\[R^*_{\mu^*}(x)=2\sqrt{\frac{(K-V_0)c}{T}}(x-x_0)-K+2V_0\] 
into the control problem, yielding a linear-quadratic optimization:
\begin{align*}
&\sup_{a} E\left[R^*_{\mu^*}\left(x_0+\int_0^T a_t dt\right) - \int_0^T ca_t^2 dt\right]\\
&=\sup_{a} E\left[\int_0^T \left(2\sqrt{\frac{(K-V_0)c}{T}}a_t - ca_t^2\right) dt\right]-K+2V_0.
\end{align*}
However, it is not clear whether $\mu^*$ is the unique equilibrium under $R^*$.
\end{remark}

\section{Price of anarchy}\label{sec:PoA}

For a fixed reward function $R$, the \emph{price of anarchy} (PoA) is defined as the ratio between the optimal centralized welfare $V_c$ and the worst equilibrium welfare/game value.
By centralized, we mean that the principal can prescribe and enforce the effort, or equivalently, the law of the controlled process, for the agents. We only consider a symmetric effort prescription, i.e.\ same terminal law for all players. To avoid triviality, we consider $R$ that is not purely rank-based, otherwise the optimal centralized welfare is always equal to $\int_0^1 R(r)dr$ which is attained by prescribing zero effort for all.


The optimal centralized welfare $V_c$ is defined as
\[V_{c}:=\sup_{a} E \left[R_{\mathcal{L}(X_T)}(X_T)-\int_0^T ca_t^2 dt\right].\]
This is a control problem of McKean-Vlasov type.
Similar to the derivation of \eqref{eq:agent-problem2}, we can reformulate the centralized problem as
\begin{align*}
V_{c}
&=\sup_{\mu\in\mathcal{P}(\bbR): \mu\sim P_T}\int_\bbR \left\{R\left(x, F_\mu(x), \int_\bbR y d\mu(y)\right)-2c\sigma^2 \ln\left(\frac{d\mu}{d\mathcal{N}\left(x_0, \sigma^2 T\right)} \right)(x)\right\}d\mu(x)\\
&=\sup_{m\in \bbR}\sup_{\substack{\mu\in\mathcal{P}(\bbR): \mu\sim P_T  \\ \int_\bbR yd\mu(y)=m}}\int_\bbR \left\{R\left(x, F_\mu(x), m\right)-2c\sigma^2 \ln\left(\frac{d\mu}{d\mathcal{N}\left(x_0, \sigma^2 T\right)} \right)(x)\right\}d\mu(x).
\end{align*}

When $R(x,r,m)$ is independent of individual performance $x$, the inner optimization over $\mu$ is explicitly solvable. Specifically, letting $\Pi(m):=\int_0^1 R(r,m)dr$, we have
\begin{align*}
V_c
& =\sup_{m} \left\{\Pi(m)-2c\sigma^2 \inf_{\mu: \mu\sim P_T, \int_\bbR yd\mu(y)=m} \int_\bbR \ln\left(\frac{d\mu}{d\mathcal{N}\left(x_0, \sigma^2 T\right)} \right)(x)d\mu(x)\right\}.
\end{align*}
Here and in the sequel, we omit the underlying assumption that $m\in\bbR$ and $\mu\in\mathcal{P}(\bbR)$.
Using the Lagrange method, we find that the mean-constrained entropy minimization has optimal value $\frac{(m-x_0)^2}{2\sigma^2 T}$, attained by the normal distribution
$\mu^*=\mathcal{N}(m, \sigma^2 T)$. It follows that
\begin{equation}\label{eq:Vc1}
V_c= \sup_{m} \left\{\Pi(m)-\frac{c}{T}(m-x_0)^2\right\}.
\end{equation}
We see that $V_c<\infty$ if $\Pi(m)$ has sub-quadratic growth. As one would expect for a symmetric game, the centralized solution does not depend on the rank-order allocation of rewards.

When $R(x,r,m)$ is independent of rank $r$, we have
\begin{align*}
V_c&=\sup_{m}\sup_{\mu: \int_\bbR yd\mu(y)=m}\int_\bbR \left\{R\left(x, m\right)-2c\sigma^2 \ln\left(\frac{d\mu}{d\mathcal{N}\left(x_0, \sigma^2 T\right)} \right)(x)\right\}d\mu(x).
\end{align*}
Again by the Lagrange method, we find that the inner maximization over $\mu$ has solution
\[f_{\mu(m)}(x)=\frac{f_0(x)\exp\left(\frac{R(x,m)-\lambda_m x}{2c\sigma^2}\right)}{\int_\bbR f_0(y)\exp\left(\frac{R(y,m)-\lambda_m y}{2c\sigma^2}\right)dy},\]
where $\lambda_m$ is determined by
\[m=\frac{\int_\bbR xf_0(x)\exp\left(\frac{R(x,m)-\lambda_m x}{2c\sigma^2}\right)dx}{\int_\bbR f_0(y)\exp\left(\frac{R(y,m)-\lambda_m y}{2c\sigma^2}\right)dy}.\]
Plugging in $f_{\mu(m)}$ into the formula for $V_c$, we get
\begin{equation}\label{eq:Vc2}
V_c=\sup_{m}\left\{\lambda_m m +2c\sigma^2 \ln\left(\int_\bbR f_0(y)\exp\left(\frac{R(y,m)-\lambda_m y}{2c\sigma^2}\right)dy\right)\right\}.
\end{equation}


\begin{example}
Suppose $R(r,m)=m+2\sigma \sqrt{c(1-\alpha) m} N^{-1}(r)$ with $\alpha\in(0,1]$. Note that $R(r,m)$ takes the same form as the effort-maximizing reward in Remark~\ref{rmk:opt-effort} with $K$ replaced by $m$ and $V_0$ by $\alpha m$. In this case, we have $\Pi(m)=m$ and by \eqref{eq:Vc1},
\[V_c=\sup_{m} \left\{m-\frac{c}{T}(m-x_0)^2\right\}=x_0+\frac{T}{4c}.\]
To compute the equilibrium welfare, observe that given $\mu\in\mathcal{P}(\bbR)$ with $\int_\bbR x d\mu(x)=m_\mu$. Let $\tilde R(r):=R(r,m_\mu)$, then $\mu$ is an equilibrium for $R$ if and only if $\mu$ is optimal for $V(R,\mu)=V(\tilde R,\mu)$, which means $\mu$ is also the unique equilibrium for the purely rank-based reward $\tilde R$. 
This allows us to directly use Remark~\ref{rmk:opt-effort} to write down one equilibrium $\mu$ (not necessarily unique), whose mean satisfies
\[m_\mu=x_0+\sqrt{\frac{(1-\alpha) m_\mu T}{c}}.\]
The unique solution is given by
\[m_\mu=x_0+\frac{(1-\alpha)T}{2c}+ \frac{1}{2c}\sqrt{(1-\alpha)^2T^2+4cx_0(1-\alpha)T},\]
with associated game value $\alpha m_\mu$. It follows that in this case, 
\[\text{PoA}\ge \frac{x_0+\frac{T}{4c}}{\alpha m_\mu}.\]
When $\alpha\rightarrow 0$, PoA$\rightarrow \infty$. When $\alpha=1$, PoA$\ge 1+\frac{T}{4cx_0}$. When $\alpha m_\mu=x_0+\frac{T}{4c}$ or $\alpha=\frac{T+4cx_0}{2T+4cx_0}$, PoA$\ge1$.
If we only consider equilibria that satisfy $\beta(\mu)<\infty$, then all inequalities become equalities.
\end{example}

\begin{example}
Suppose $R(x,m)=\alpha x+(1-\alpha)g(m)$ where $g$ is bounded increasing, and $\alpha\in[0,1]$. In this case, it can be shown that $\mu(m)=\mathcal{N}(m,\sigma^2 T)$, $m=x_0+\frac{T}{2c}(\alpha-\lambda_m)$, and
\[\int_\bbR f_0(y)\exp\left(\frac{R(y,m)-\lambda_m y}{2c\sigma^2}\right)dy=\exp\left(\frac{(1-\alpha)\Pi(m)+\frac{c}{T}(m^2-x_0^2)}{2c\sigma^2}\right).\]
By \eqref{eq:Vc2}, we get 
\[V_c=\sup_{m}\left\{\alpha m+(1-\alpha)g(m)-\frac{c}{T}(m-x_0)^2\right\}.\]
Since $R$ is independent of rank and linear in $x$, we have that $\beta(\mu)<\infty$ for all $\mu\in\mathcal{P}(\bbR)$. By Theorem~\ref{thm:NE0}, all equilibria are characterized by \eqref{eq:fixed-point}. 
We find that there is a unique equilibrium $\mu$ which is normal with mean $m_{\mu}=x_0+\frac{T\alpha}{2c}$ and variance $\sigma^2T)$. The associated game value is
\[2c\sigma^2 \ln\left(\int_\bbR f_0(y)\exp\left(\frac{R(y,m_{\mu})}{2c\sigma^2}\right)dy\right)=(1-\alpha)g(m_{\mu})+\frac{c}{T}(m_{\mu}^2-x_0^2).\]
It follows that, after rearranging the denominator,
\begin{align*}
\text{PoA}
&=\frac{\sup_{m}\left\{\alpha m+(1-\alpha)g(m)-\frac{c}{T}(m-x_0)^2\right\}}{\alpha m_{\mu}+(1-\alpha)g(m_{\mu})-\frac{c}{T}(m_{\mu}-x_0)^2}.
\end{align*}
When $\alpha=0$, PoA=$\sup_{m}\left\{g(m)-\frac{c}{T}(m-x_0)^2\right\}/g(x_0)$. When $\alpha=1$, the game is non-interactive, and PoA$=1$.
\end{example}

\appendix
\section{Schr\"odinger bridges from space to time}\label{sec:Appen}

Let $\Omega=C([0,T],\bbR)$ be the canonical space and $\bbW_x$ be the Wiener measure starting at $x$ at time zero. Also let $(\mathcal{F}_t)_{t\in[0,T]}$ be the filtration generated by the canonical process. Define $\tau(\omega):=\inf\{t\in[0,T]: w_t=0\}$ with the convention that $\inf \emptyset=\infty$. Given a reference measure $P\in\mathcal{P}(\Omega)$, a source distribution $\nu\in\mathcal{P}(\bbR)$ and a target distribution $\mu\in\mathcal{P}(\bbT)$ where $\bbT:=[0,T]\cup\{\infty\}$, consider the following variant of the Schr\"odinger bridge problem:
\begin{equation*} 
\inf_{Q\in\mathcal{P}(\Omega)} H(Q|P)\quad \text{subject to} \quad Q_0=\nu, \quad Q\circ \tau^{-1}=\mu.
\end{equation*}
For any $Q\in\mathcal{P}(\Omega)$, define $Q^{x,t}:=Q(\cdot| \omega_0=x, \tau(\omega)=t)$. We have the disintegration:
\[Q(\cdot)=\int_{\bbR\times \bbT} Q^{x,t}(\cdot)\pi(dx,dt), \quad \pi=Q\circ (\omega_0, \tau)^{-1}.\]
Similar to the standard Schr\"odinger bridge problem, one can show that the optimal transport plan is given by
\[Q^*=\int_{\bbR\times \bbT} P^{x,t}(\cdot)\pi^*(dx,dt),\]
where $\pi^*$ is the solution to
\[ \inf_{\pi \in\mathcal{P}(\bbR\times \bbT)} H(\pi | P\circ (\omega_0, \tau)^{-1})\quad \text{subject to} \quad \pi_0=\nu, \quad \pi_1=\mu,\]
assuming the infimum is attained. 

Now, consider the hitting time ranking game of Bayraktar et al.\ \cite{BCZ18}, where each agent solves
\begin{equation*}\label{eq:hitting-time-game}
\hat V=\sup_aE\left[R_{\tilde \mu}(\tau)-\int_0^{\tau\wedge T} ca_t^2 dt\right], \quad \tau=\inf\left\{t\ge 0: x_0-\int_0^t a_s ds+\sigma B_s=0\right\}.
\end{equation*}
Here it is assumed that $R_{\tilde\mu}(t)=R_\infty\in\bbR$ for all $t>T$.
Take $X=x_0+\sigma \omega_t$ and $P=\mathbb{W}_0\circ X^{-1}$, and identify $a_t$ with the set of laws
\[\mathcal{Q}:=\left\{Q\in\mathcal{P}(\Omega): Q\sim P, \, \frac{dQ}{dP}=\frac{dQ}{dP}\bigg|_{\mathcal{F}_{\tau\wedge T}}\right\}.\] 
The condition on the Radon-Nikodym derivative means $a_t\equiv 0$ for all $t>\tau\wedge T$.
Let $\mu_0:=P\circ \tau^{-1}$ be the law of the first passage time of level $x_0/\sigma$ of a Brownian motion.
We can rewrite the agent's control problem in weak formulation as
\begin{align*}
\hat V&=\sup_{Q\in\mathcal{Q}} E^Q R_{\tilde \mu}(\tau)-2c\sigma^2 H(Q|P)\\
&=\sup_{\mu\sim \mu_0}\sup_{Q\in\mathcal{Q}:\, Q\circ \tau^{-1}=\mu} \int_\bbT R_{\tilde \mu}(x)d\mu(x)-2c\sigma^2 H(Q|P)\\
&\le\sup_{\mu\sim \mu_0} \int_\bbT R_{\tilde \mu}(t)d\mu(t)-2c\sigma^2 \inf_{Q\in\mathcal{P}(\Omega): \,Q_0=\delta_{x_0}, \,Q\circ \tau^{-1}=\mu} H(Q|P)\\
&=\sup_{\mu\sim \mu_0} \int_\bbT R_{\tilde \mu}(t)d\mu(t)-2c\sigma^2 \inf_{\pi \in\mathcal{P}(\bbR\times\bbT): \,\pi_0=\delta_{x_0}, \,\pi_1=\mu} H(\pi |P\circ (\omega_0, \tau)^{-1})\\
&=\sup_{\mu\sim \mu_0} \int_\bbT R_{\tilde \mu}(t)d\mu(t)-2c\sigma^2 H(\mu | \mu_0)
\end{align*}
Note that for each $\mu\sim \mu_0$, the associated optimal $Q=Q_\mu=\int_\bbT P^{x_0,t}(\cdot)\mu(dt)$ is always equivalent to $P$ and satisfies
$dQ/dP=\zeta(\tau)\in\mathcal{F}_{\tau\wedge T}$, where $\zeta(t):=d\mu(t)/d\mu_0(t)$. It follows that $Q\in\mathcal{Q}$ and the inequality is in fact an equality. The resulting static problem can be further split into a constrained calculus of variation problem, followed by a static optimization:
\begin{align*}
\hat V=\sup_{\beta\in (0,1)}\sup_{f_\mu|_{[0,T]}>0, \,\int_0^T f_\mu(t)dt=\beta}& \int_0^T \left\{R_{\tilde \mu}(t)-2c\sigma^2 \ln\left(\frac{f_\mu(t)}{f_{\mu_0}(t)}\right)\right\} f_\mu(t)dt\\
& + R_\infty(1-\beta)-2c\sigma^2 (1-\beta)\ln\left(\frac{1-\beta}{1-F_{\mu_0}(T)}\right).
\end{align*}
By elementary calculation, one deduces the same formula as Bayraktar et al.\ \cite[Eq.\ (2.5)]{BCZ18}.

\bibliographystyle{plain}
\bibliography{tournament}{}

\begin{thebibliography}{10}

\bibitem{BCZ18}
Erhan Bayraktar, Jak\v{s}a Cvitani\'{c}, and Yuchong Zhang.
\newblock Large tournament games.
\newblock {\em Ann. Appl. Probab.}, 29(6):3695--3744, 2019.

\bibitem{BZ16}
Erhan Bayraktar and Yuchong Zhang.
\newblock A rank-based mean field game in the strong formulation.
\newblock {\em Electron. Commun. Probab.}, 21:Paper No. 72, 12, 2016.

\bibitem{CRSICON}
Pierre Cardaliaguet and Catherine Rainer.
\newblock On the (in)efficiency of {MFG} equilibria.
\newblock {\em SIAM J. Control Optim.}, 57(4):2292--2314, 2019.

\bibitem{CarmonaDelaRue.17ab}
Ren\'e Carmona and Fran\c{c}ois Delarue.
\newblock {\em Probabilistic Theory of Mean Field Games with Applications
  I-II}.
\newblock Springer, 2018.

\bibitem{refId0}
Ren\'e Carmona, Christy~V. Graves, and Zongjun Tan.
\newblock Price of anarchy for mean field games.
\newblock {\em ESAIM: ProcS}, 65:349--383, 2019.

\bibitem{LackerAAP}
Ren{\'e} Carmona and Daniel Lacker.
\newblock A probabilistic weak formulation of mean field games and
  applications.
\newblock {\em Ann. Appl. Probab.}, 25(3):1189--1231, 2015.

\bibitem{CGP16}
Yongxin Chen, Tryphon~T. Georgiou, and Michele Pavon.
\newblock On the relation between optimal transport and {S}chr\"{o}dinger
  bridges: a stochastic control viewpoint.
\newblock {\em J. Optim. Theory Appl.}, 169(2):671--691, 2016.

\bibitem{FNS18}
Dawei Fang, Thomas Noe, and Philipp Strack.
\newblock Turning up the heat: The discouraging effect of competition in
  contests.
\newblock {\em Journal of Political Economy}, 128(5):1940--1975, 2020.

\bibitem{10.1007/BFb0086180}
Hans F{\"o}llmer.
\newblock Random fields and diffusion processes.
\newblock In Paul-Louis Hennequin, editor, {\em {\'E}cole d'{\'E}t{\'e} de
  Probabilit{\'e}s de Saint-Flour XV--XVII, 1985--87}, pages 101--203, Berlin,
  Heidelberg, 1988. Springer Berlin Heidelberg.

\bibitem{GueantLasryLions.11}
Olivier Gu{\'e}ant, Jean-Michel Lasry, and Pierre-Louis Lions.
\newblock Mean field games and applications.
\newblock In {\em Paris-{P}rinceton {L}ectures on {M}athematical {F}inance
  2010}, volume 2003 of {\em Lecture Notes in Math.}, pages 205--266. Springer,
  Berlin, 2011.

\bibitem{HuangMalhameCaines.07}
Minyi Huang, Peter~E. Caines, and Roland~P. Malham{\'e}.
\newblock Large-population cost-coupled {LQG} problems with nonuniform agents:
  individual-mass behavior and decentralized {$\epsilon$}-{N}ash equilibria.
\newblock {\em IEEE Trans. Automat. Control}, 52(9):1560--1571, 2007.

\bibitem{HuangMalhameCaines.06}
Minyi Huang, Roland~P. Malham{\'e}, and Peter~E. Caines.
\newblock Large population stochastic dynamic games: closed-loop
  {M}c{K}ean-{V}lasov systems and the {N}ash certainty equivalence principle.
\newblock {\em Commun. Inf. Syst.}, 6(3):221--251, 2006.

\bibitem{doi:10.1287/moor.2018.0929}
Daniel Lacker and Kavita Ramanan.
\newblock Rare nash equilibria and the price of anarchy in large static games.
\newblock {\em Mathematics of Operations Research}, 44(2):400--422, 2019.

\bibitem{LasryLions.06a}
Jean-Michel Lasry and Pierre-Louis Lions.
\newblock Jeux \`a champ moyen. {I}. {L}e cas stationnaire.
\newblock {\em C. R. Math. Acad. Sci. Paris}, 343(9):619--625, 2006.

\bibitem{LasryLions.06b}
Jean-Michel Lasry and Pierre-Louis Lions.
\newblock Jeux \`a champ moyen. {II}. {H}orizon fini et contr\^ole optimal.
\newblock {\em C. R. Math. Acad. Sci. Paris}, 343(10):679--684, 2006.

\bibitem{LasryLions.07}
Jean-Michel Lasry and Pierre-Louis Lions.
\newblock Mean field games.
\newblock {\em Jpn. J. Math.}, 2(1):229--260, 2007.

\bibitem{LazearRosen81}
Edward Lazear and Sherwin Rosen.
\newblock Rank-order tournaments as optimum labor contracts.
\newblock {\em Journal of Political Economy}, 89(5):841--64, 1981.

\bibitem{Leonard14}
Christian L\'{e}onard.
\newblock A survey of the {S}chr\"{o}dinger problem and some of its connections
  with optimal transport.
\newblock {\em Discrete Contin. Dyn. Syst.}, 34(4):1533--1574, 2014.

\bibitem{Luenberger69}
David~G. Luenberger.
\newblock {\em Optimization by vector space methods}.
\newblock John Wiley \& Sons, Inc., New York-London-Sydney, 1969.

\bibitem{MajorizationTheory}
Albert~W. Marshall, Ingram Olkin, and Barry~C. Arnold.
\newblock {\em Inequalities: theory of majorization and its applications}.
\newblock Springer Series in Statistics. Springer, New York, second edition,
  2011.

\bibitem{MZ17}
Marcel Nutz and Yuchong Zhang.
\newblock A mean field competition.
\newblock {\em Math. Oper. Res.}, 44(4):1245--1263, 2019.

\end{thebibliography}

\end{document}